\theoremstyle{thm}
\newtheorem{thm}{Theorem}[section]
\theoremstyle{lem}
\newtheorem{prop}[thm]{Proposition}
\newtheorem{defn}[thm]{Definition}
\theoremstyle{rem}
\newcommand{\Hh}{\mathcal{H}}
\newcommand{\G}{\mathcal{G}}
\title{\textbf{On normal subgroupoids}}
\author{Jes\'{u}s \'{A}vila and V\'{\i}ctor Mar\'{\i}n$^{\text{1}}$\\
   \small $^{\text{1}}$Departamento de Matem\'{a}ticas y Estad\'{i}stica \\
   \small Universidad del Tolima\\
   \small Santa Helena, Ibagu\'{e}, Colombia\\
   \small e-mail: javila@ut.edu.co, vemarinc@ut.edu.co}
   \date{\today}
\begin{document}
\maketitle
\begin{abstract}
\noindent
In this paper we present some algebraic properties of subgroupoids and normal subgroupoids. We define the normalizer of a wide subgroupoid $\Hh$ and show that, as in the case of groups, the normalizer is the greatest wide subgroupoid of the groupoid $\G$ in which $\Hh$ is normal. Furthermore, we give the definition of center and commutator and prove that both are normal subgroupoids, the first one of the union of all the isotropy groups of $\G$ and the second one of $\G$. Finally, we introduce the concept of inner isomorphism of $\mathcal{G}$ and show that the set of all the inner isomorphisms of $\G$ is a normal subgroupoid, which is isomorphic to the quotient groupoid of $\mathcal{G}$ by its center $\mathcal{Z}(\mathcal{G})$, which extends to groupoids a well-known result in groups.
\end{abstract}

\noindent
\textbf{2010 AMS Subject Classification:} Primary 20N02.  Secondary 20L05.\\
\noindent
\textbf{Key Words:} Groupoid,  normal subgroupoid, normalizer, center, commutator and inner isomorphisms.

\section{Introduction}
The notion of groupoid (denominated Brandt groupoid) was first introduced in \cite{B}, from an algebraic point of view. Next, Brandt groupoids were generalized by C. Ehresmann in \cite{E}, where he added other structures such as topological and differentiable.
Other equivalent definition of groupoid and its properties appear in \cite{Br}, where a groupoid is defined as a small category where each morphism is invertible.

In \cite[Definition 1.1]{I} the author follows the definition given by Ehresmann  and presents the notion of groupoid as a particular case of an universal algebra,  he defines strong homomorphism for groupoids and proves the correspondence theorem in this context. The Cayley Theorem for groupoids is also presented in \cite[Theorem 3.1]{I2}.

The definition of groupoid from an approach axiomatic, such as that of group, is presented in \cite{L}. In that sense,  Paques and Tamusiunas give necessary and sufficient conditions to be a normal subgroupoid and they construct the quotient groupoid \cite{PT}.  In \cite{AMP}, the isomorphism theorems are proved and one application of these to the normal series is presented.

The purpose of this paper is to introduce several concepts into groupoids, which are analogous to those defined in groups  such as center, normalizer, commutator and inner isomorphism. In addition, the normality of these subgroupoids and other properties that they satisfy are studied. The paper is organized of the following manner. In Section 2 we present some preliminaries and basic results on groupoids and subgroupoids, which are used in the following sections. Next, in Section 3 we present some algebraic properties of normal subgroupoids and introduce the normalizer of a subgroupoid $\mathcal{H}$, $\mathcal{N}_{\mathcal{G}}(\mathcal{H})$. Furthermore, we show that the normalizer is the greatest wide subgroupoid of the groupoid $\mathcal{G}$ in which $\mathcal{H}$ is normal (Proposition \ref{normalizer}). In Section 4 we introduce the center $\mathcal{Z}(\mathcal{G})$ and the commutator subgroupoid $\mathcal{G}'$ of $\mathcal{G}$ and prove that they are normal subgroupoids and that $\mathcal{G}/\mathcal{G}'$ is the largest abelian quotient of $\mathcal{G}$ (Propositions \ref{center},\, \ref{commutator}). Finally, in Section 5 we define inner isomorphisms of a groupoid and prove that the set of all the inner isomorphisms of $\mathcal{G}$, $\mathcal{I}(\mathcal{G})$, is a normal subgroupoid and it is isomorphic to the quotient groupoid $\mathcal{G}/\mathcal{Z}(\mathcal{G})$ (Proposition \ref{inner}).

\medskip

\section{Preliminaries and basic results}

Now, we give the definition  of groupoid from purely algebraic point of view. We follow the definition presented in \cite{L}.


\begin{defn}\cite[p. 78]{L}.\label{d2}
Let $\mathcal{G}$ be a set equipped with a partial binary operation which is denoted by concatenation. If $g,h \in \mathcal{G}$ and the product $gh$ is defined, we write $\exists gh$. A element $e\in \mathcal{G}$  is called an identity if $\exists eg$ implies $eg=g$ and $\exists g'e$ implies $g'e=g'$. The set of identities of $\mathcal{G}$ is denoted $\mathcal{G}_0$. $\mathcal{G}$ is said to be a groupoid if the following axioms hold:
\begin{enumerate}
\item $\exists g(hl)$ if, and only if, $\exists (gh)l$, and $g(hl)=(gh)l$.
\item $\exists g(hl)$ if, and only if, $\exists gh$ and $\exists hl$.
\item For each $g\in \mathcal{G}$, there exist unique elements $d(g), r(g)\in \mathcal{G}$ such that $\exists gd(g)$ and $\exists r(g)g$ and $gd(g)=g=r(g)g$.
\item For each $g\in \mathcal{G}$, there exist a element $g^{-1}\in \mathcal{G}$ such that $d(g)=g^{-1}g$ and $r(g)=gg^{-1}$.
\end{enumerate}
\end{defn}



In groupoids is important to characterize in which case exist the product of two elements. It can be proved that if $x,y\in \mathcal{G}$ then $\exists xy$ if and only if $d(g)=r(h)$ \cite[Lemma 2.3]{AMP}. The following proposition shows several important properties that are fulfilled in the groupoids.




\begin{prop}\cite[Proposition 2.7]{AMP}.\label{t1}
Let $\mathcal{G}$ be a groupoid. Then for each $g,h, k, l\in G$ we have
\begin{enumerate}
\item The element $g^{-1}$  is unique and $(g^{-1})^{-1}=g$.
\item If $\exists (gh)(kl)$, then $(gh)(kl)=g[(hk)l]$.
\item $d(gh)=d(h)$ and $r(gh)=r(g)$.
\item $\exists gh$ if, and only if, $\exists h^{-1}g^{-1}$ and, in this case, $(gh)^{-1}=h^{-1}g^{-1}$.
\end{enumerate}
\end{prop}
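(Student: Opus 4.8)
The proof rests on two preliminary observations extracted from the axioms together with the existence criterion $\exists xy \iff d(x)=r(y)$. First, applying this criterion to the products $g^{-1}g$ and $gg^{-1}$, which are defined by axiom (4) of Definition \ref{d2}, yields $d(g^{-1})=r(g)$ and $r(g^{-1})=d(g)$; these let me translate every condition on an inverse into one on the original element. Second, I record that each application of associativity (axiom (1)) is legitimate only after checking that the relevant products are defined, and that this checking is always carried out through axiom (2) combined with the existence criterion. All four parts then reduce to short computations whose only genuine content is this domain bookkeeping, which is precisely where the partiality of the operation makes the argument depart from the group case.

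For part (1) I would establish uniqueness first. If $h$ satisfies $hg=d(g)$ and $gh=r(g)$, then $\exists hg$ and $\exists gh$ force $d(h)=r(g)$ and $r(h)=d(g)$, and the chain
\[
h=h\,d(h)=h\,r(g)=h(gg^{-1})=(hg)g^{-1}=d(g)\,g^{-1}=r(g^{-1})g^{-1}=g^{-1}
\]
gives $h=g^{-1}$, where the middle equality uses axiom (2) to guarantee $\exists h(gg^{-1})$ before invoking associativity, and the last two equalities use the first preliminary observation and axiom (3). The involution $(g^{-1})^{-1}=g$ is then immediate: since $d(g^{-1})=r(g)=gg^{-1}$ and $r(g^{-1})=d(g)=g^{-1}g$, the element $g$ satisfies the defining relations of the inverse of $g^{-1}$, so uniqueness identifies it as $(g^{-1})^{-1}$.

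Part (3) I would treat next, as parts (2) and (4) depend on it. Given $\exists gh$ (so $d(g)=r(h)$), I verify that $d(h)$ is the source of $gh$ via $(gh)\,d(h)=g\bigl(h\,d(h)\bigr)=gh$, the first equality justified by axiom (2) and the resulting associativity; the uniqueness of the source in axiom (3) then gives $d(gh)=d(h)$, and the symmetric computation gives $r(gh)=r(g)$. For part (2), the identity is pure iterated associativity: assuming $\exists (gh)(kl)$, three successive applications of axiom (1),
\[
(gh)(kl)=\bigl((gh)k\bigr)l=\bigl(g(hk)\bigr)l=g\bigl((hk)l\bigr),
\]
yield the claim, each rebracketing being licensed because the existence of $(gh)(kl)$ propagates existence to every subterm. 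Finally, for part (4), the equivalence $\exists gh \iff \exists h^{-1}g^{-1}$ is immediate from the existence criterion and the first preliminary observation, both sides amounting to $r(h)=d(g)$; and to identify the inverse I would use part (2) to rebracket and compute
\[
(gh)(h^{-1}g^{-1})=g\bigl((hh^{-1})g^{-1}\bigr)=gg^{-1}=r(g)=r(gh),
\]
together with the analogous $(h^{-1}g^{-1})(gh)=d(h)=d(gh)$, so that the uniqueness from part (1) forces $(gh)^{-1}=h^{-1}g^{-1}$. The main obstacle, as flagged above, is not any single clever step but the consistent verification that every product written down is actually defined; once the two preliminary observations are in hand, this reduces to routine applications of axiom (2) and the existence criterion.
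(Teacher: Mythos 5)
Your proof is correct. Note, however, that this paper does not actually prove Proposition \ref{t1}: it is imported verbatim, with citation, from \cite[Proposition 2.7]{AMP}, so there is no in-paper argument to compare yours against. Judged on its own, your argument is the standard derivation from Definition \ref{d2} together with the existence criterion $\exists xy \Leftrightarrow d(x)=r(y)$ (which the paper states, citing \cite[Lemma 2.3]{AMP}, before the proposition, so invoking it is legitimate and not circular). Each step checks out: the identities $d(g^{-1})=r(g)$ and $r(g^{-1})=d(g)$ are correctly extracted from axiom (4) and the criterion; the uniqueness chain for part (1) verifies existence of each product via axiom (2) before rebracketing with axiom (1); part (3) correctly appeals to the uniqueness clause of axiom (3) applied to the pair $(d(h),r(gh))$; and part (4) correctly reduces to the uniqueness of part (1). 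The only step you leave implicit is the verification that $\exists (gh)(h^{-1}g^{-1})$ before invoking part (2) in the last computation; this follows at once from part (3) and the criterion, since $d(gh)=d(h)=r(h^{-1})=r(h^{-1}g^{-1})$, and is covered by the bookkeeping principle you announce, so it is a cosmetic omission rather than a gap.
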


The following results are obtained easily from the previous proposition.

\begin{prop}\cite[Proposition 2.8]{AMP}.\label{propiedades de d y r}
If $\mathcal{G}$ is a groupoid and $g\in G$ then $d(g)=r(g^{-1}), d(d(g))=d(g)=r(d(g))$ and $d(r(g))=r(g)=r(r(g))$.
\end{prop}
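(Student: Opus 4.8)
The plan is to derive all three chains of equalities directly from the defining relations $d(g)=g^{-1}g$ and $r(g)=gg^{-1}$ given in axiom (4) of Definition \ref{d2}, using Proposition \ref{t1} as the only machinery. The two facts I expect to invoke repeatedly are the involution identity $(g^{-1})^{-1}=g$ from Proposition \ref{t1}(1) and the source/target formulas $d(gh)=d(h)$ and $r(gh)=r(g)$ from Proposition \ref{t1}(3). Everything reduces to rewriting $d$ and $r$ as products with $g^{-1}$ and then reading off the source and target of those products.

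For the first identity, I would write $d(g)=g^{-1}g$ by axiom (4), and separately compute $r(g^{-1})=g^{-1}(g^{-1})^{-1}=g^{-1}g$ using $(g^{-1})^{-1}=g$; the two expressions coincide, giving $d(g)=r(g^{-1})$. For the second chain, substituting $d(g)=g^{-1}g$ and applying Proposition \ref{t1}(3) yields $d(d(g))=d(g^{-1}g)=d(g)$ and $r(d(g))=r(g^{-1}g)=r(g^{-1})$, and the latter equals $d(g)$ by the first identity. For the third chain, substituting $r(g)=gg^{-1}$ and again applying Proposition \ref{t1}(3) gives $r(r(g))=r(gg^{-1})=r(g)$ at once, and $d(r(g))=d(gg^{-1})=d(g^{-1})$; applying the first identity to $g^{-1}$ in place of $g$ (so that $d(g^{-1})=r((g^{-1})^{-1})=r(g)$) closes the argument.

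I do not expect a genuine obstacle here; the only point requiring care is purely bookkeeping, namely checking that each product I write down is actually defined so that Proposition \ref{t1}(3) applies. Since $g^{-1}g$ and $gg^{-1}$ exist by axiom (4), every product appearing in the computation is a composite of $g$ (or $g^{-1}$) with one of its own associated identities, so definedness is automatic and the result follows by direct substitution.
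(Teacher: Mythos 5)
Your proof is correct and matches the paper's intended route: the paper gives no explicit argument, merely citing \cite[Proposition 2.8]{AMP} and noting that the result ``is obtained easily from the previous proposition,'' i.e.\ from Proposition \ref{t1}, which is exactly what you do. Your computations from axiom (4) of Definition \ref{d2} together with Proposition \ref{t1}(1) and \ref{t1}(3) are all valid, and the definedness of every product used is indeed automatic since $g^{-1}g$ and $gg^{-1}$ exist by definition.
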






If $\mathcal{G}$ is a groupoid, then the identities of $\mathcal{G}$ are the elements $e=d(g)$ with $g\in \mathcal{G}$ \cite[Proposition 2.10]{AMP} and we set $\mathcal{G}_0=\{e=d(g)\mid g\in \mathcal{G}\}$. Now, if $e\in \mathcal{G}_0$ then by
Proposition \ref{propiedades de d y r} it has that $d(e)=r(e)=e$,
  $\exists ee$ and $ee=e$ and $e^{-1}=e$ and moreover the set $\G_e=\{g\in \mathcal{G}\mid d(g)=r(g)=e\}$ is a group with identity element $e$, which is called the isotropy group associated to $e$.

  \vspace{.3cm}

  These isotropy groups are very important, because they allow us to extend some concepts from groups to groupoids. For example, a groupoid $\mathcal{G}$ is called abelian if all its isotropy groups are abelian \cite[Definition 1.1]{MA}. In this way the set ${\rm Iso}(\G)=\bigcup_{e\in \G_0}\G_e$, which is called the isotropy subgroupoid of $\mathcal{G}$, is essential for the study of the groupoids.

\vspace{.2cm}

Now, we present the definition of subgroupoid and wide subgroupoid and prove some algebraic properties of these substructures, which are also valid in groups.

\begin{defn}\cite[p. 107]{PT}.
Let $\mathcal{G}$ be a groupoid and $\mathcal{H}$ a nonempty subset of $\mathcal{G}$.  $\mathcal{H}$ is said to be a subgroupoid of $\mathcal{G}$ if for all $g,h \in \mathcal{H}$ it satisfies:
\begin{enumerate}
\item $g^{-1}\in \mathcal{H}$;
\item $\exists gh \Rightarrow gh\in \mathcal{H}$.
\end{enumerate}
In this case we denote $\mathcal{H}< \mathcal{G}$. In addition, if $\mathcal{H}_0=\mathcal{G}_0$ (or equivalently $\mathcal{G}_0\subseteq \mathcal{H}$) then $\mathcal{H}$ is called a wide subgroupoid of $\mathcal{G}$.
\end{defn}


Note that if $\mathcal{G}$ is a groupoid then the sets $\{d(g)\}$ ($g\in \mathcal{G}$), $\mathcal{G}_e$ ($e\in \mathcal{G}_0$), $\mathcal{G}_0$, $Iso(\mathcal{G})$ and $\mathcal{G}$ are subgroupoids of $\mathcal{G}$. Also, it is easy to see that if $\mathcal{H}$ is a subgroupoid then the set $\mathcal{H}\cup \mathcal{G}_0$ is a wide subgroupoid of $\mathcal{G}$.

\vspace{.3cm}

Moreover, if $\mathcal{H}$ is a wide subgroupoid of $\mathcal{G}$ and $g\in \mathcal{G}$ then $g^{-1}\mathcal{H}g=\{g^{-1}hg\mid h\in \mathcal{H}\textnormal{ and } r(h)=d(h)=r(g)\}$ is a subgroupoid of $\mathcal{G}$. In fact,  note that $r(g)\in \mathcal{H}$ and then $\exists g^{-1}r(g)g$ that is $d(g)=g^{-1}r(g)g\in g^{-1}\mathcal{H}g$. If $x,y\in \mathcal{G}$ then $x=g^{-1}hg$ and $y=g^{-1}tg$ with $h,t\in \mathcal{H}$ and $r(h)=d(h)=r(t)=d(t)=r(g)$. Since $d(g^{-1}hg)=d(g)=r(g^{-1})=r(g^{-1}tg)$ then $\exists (g^{-1}hg)(g^{-1}tg)$ and it has that
    $xy = (g^{-1}hg)(g^{-1}tg)
     = (g^{-1}h)(r(g))(tg)
     = (g^{-1}h)(r(t)t)g
     = g^{-1}htg$.
  Now since $\exists ht$ then $ht\in \mathcal{H}$ and thus $xy\in g^{-1}\mathcal{H}g$. Finally, if $x\in \mathcal{G}$ then $x=g^{-1}hg$ with $h\in \mathcal{H}$ and $r(h)=d(h)=r(g)$. Thus, $x^{-1}=g^{-1}h^{-1}g\in g^{-1}\mathcal{H}g$ because $h^{-1}\in \mathcal{H}$. Hence, the set $g^{-1}\mathcal{H}g$ is a subgroupoid of $\mathcal{G}$. Note in particular that $g^{-1}\mathcal{H}g$ is a subgroup of $\mathcal{G}_{d(g)}$ and $g^{-1}\mathcal{H}g=g^{-1}\mathcal{H}_{r(g)}g$.

\begin{prop}\label{interseccion de subgrupoides}
  Let $\mathcal{G}$ be a groupoid and $\left\{\mathcal{H}_i\right\}_{i\in I}$ a family of subgroupoids of $\mathcal{G}$. Then:
  \begin{enumerate}
  \item If $\bigcap _{i\in I}\mathcal{H}_i\neq \emptyset$, then $\bigcap _{i\in I}\mathcal{H}_i$ is a subgroupoid of $\mathcal{G}$.
  \item If $\mathcal{H}_i$ is wide for each $i\in I$ then $\bigcap _{i\in I}\mathcal{H}_i$ is a wide subgroupoid of $\mathcal{G}$.
\end{enumerate}
\end{prop}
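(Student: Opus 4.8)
The plan is to verify the two defining closure conditions of a subgroupoid directly on the intersection, exploiting the fact that membership in $\bigcap_{i\in I}\mathcal{H}_i$ is equivalent to simultaneous membership in every $\mathcal{H}_i$. Write $\mathcal{H}=\bigcap_{i\in I}\mathcal{H}_i$. For part (1), the hypothesis $\mathcal{H}\neq\emptyset$ is exactly what guarantees that $\mathcal{H}$ is a nonempty subset of $\mathcal{G}$, so that afterwards only closure under inverses and under defined products needs to be checked.

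First I would take $g\in\mathcal{H}$; then $g\in\mathcal{H}_i$ for every $i\in I$, and since each $\mathcal{H}_i$ is a subgroupoid we obtain $g^{-1}\in\mathcal{H}_i$ for every $i$, whence $g^{-1}\in\mathcal{H}$. Next I would take $g,h\in\mathcal{H}$ with $\exists gh$; again $g,h\in\mathcal{H}_i$ for every $i$, and the closure of each subgroupoid under defined products yields $gh\in\mathcal{H}_i$ for every $i$, so that $gh\in\mathcal{H}$. These two verifications are precisely the axioms in the definition of subgroupoid, which settles (1).

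For part (2), I would first note that the nonemptiness hypothesis of (1) is now automatic. Indeed, each $\mathcal{H}_i$ being wide means $\mathcal{G}_0\subseteq\mathcal{H}_i$, hence $\mathcal{G}_0\subseteq\mathcal{H}$; and since $\mathcal{G}_0\neq\emptyset$ (for any $g\in\mathcal{G}$ one has $d(g)\in\mathcal{G}_0$), the intersection $\mathcal{H}$ is nonempty. Applying part (1) then shows $\mathcal{H}$ is a subgroupoid, and the inclusion $\mathcal{G}_0\subseteq\mathcal{H}$ is exactly the condition defining a wide subgroupoid, completing (2).

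As for the main obstacle, there is in truth no substantive difficulty: the argument is a routine transfer of the subgroupoid axioms through the intersection, identical in spirit to the group-theoretic case. The only point demanding a moment's care is the bookkeeping of nonemptiness, namely that in (1) it must be assumed as a hypothesis whereas in (2) it is furnished for free by wideness; accordingly I would be careful to invoke (1) only after having established $\mathcal{G}_0\subseteq\mathcal{H}$.
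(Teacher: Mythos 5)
Your proof is correct and follows essentially the same route as the paper's: closure under inverses and defined products is verified elementwise through the intersection for part (1), and part (2) follows by noting $\mathcal{G}_0\subseteq\bigcap_{i\in I}\mathcal{H}_i$, which supplies both nonemptiness and wideness. No substantive difference from the paper's argument.
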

\begin{proof}
1. Let $A=\bigcap _{i\in I}\mathcal{H}_i$. Since $A\neq \emptyset$, let $a,b\in A$ and suppose that $\exists ab$. Then $d(a)=r(b)$ and $a,b\in \mathcal{H}_i$ for each $i\in I$. Thus $ab\in \mathcal{H}_i$ for each $i\in I$ that is $ab\in \bigcap _{i\in I}\mathcal{H}_i=A$. Finally, if $a\in A$ then $a\in \mathcal{H}_i$ for each $i\in I$ and then $a^{-1}\in \mathcal{H}_i$ for each $i\in I$. So, $a^{-1}\in \bigcap _{i\in I}\mathcal{H}_i=A$ and the result follows.

2. It is enough to observe that $\emptyset \neq \mathcal{G}_0\subseteq A$ and to apply item (i).
\end{proof}

\begin{prop}\label{SG}
  Let $\mathcal{G}$ be a groupoid and $\emptyset \neq B\subseteq G$. Then there exists the smallest subgroupoid of $\mathcal{G}$ which contains $B$.
\end{prop}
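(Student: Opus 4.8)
The plan is to realize the smallest subgroupoid containing $B$ as an intersection, exactly as one does for subgroups of a group. First I would introduce the family $\mathcal{F}=\{\mathcal{H} : \mathcal{H}<\mathcal{G}\text{ and }B\subseteq \mathcal{H}\}$ of all subgroupoids of $\mathcal{G}$ that contain $B$. This family is nonempty, since $\mathcal{G}$ itself is a subgroupoid of $\mathcal{G}$ (as noted after the definition of subgroupoid) and trivially $B\subseteq \mathcal{G}$, so that $\mathcal{G}\in\mathcal{F}$.

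Next I would set $\langle B\rangle=\bigcap_{\mathcal{H}\in\mathcal{F}}\mathcal{H}$ and check that this is the desired object. The key point is that this intersection is nonempty: every $\mathcal{H}\in\mathcal{F}$ contains $B$, so $B\subseteq\langle B\rangle$, and since $B\neq\emptyset$ we obtain $\langle B\rangle\neq\emptyset$. With nonemptiness in hand, Proposition \ref{interseccion de subgrupoides}(1) applies directly and yields that $\langle B\rangle$ is a subgroupoid of $\mathcal{G}$.

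It then remains to verify minimality. By construction $B\subseteq\langle B\rangle$, so $\langle B\rangle$ is a subgroupoid containing $B$. On the other hand, if $\mathcal{K}$ is any subgroupoid of $\mathcal{G}$ with $B\subseteq\mathcal{K}$, then $\mathcal{K}\in\mathcal{F}$, whence $\langle B\rangle=\bigcap_{\mathcal{H}\in\mathcal{F}}\mathcal{H}\subseteq\mathcal{K}$. Thus $\langle B\rangle$ is contained in every subgroupoid of $\mathcal{G}$ that contains $B$, which is precisely the assertion that it is the smallest such subgroupoid.

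I do not expect any serious obstacle here; this is the standard ``intersection of all substructures containing $B$'' argument. The only point that requires genuine care --- and the reason the hypothesis $B\neq\emptyset$ is needed --- is guaranteeing that $\langle B\rangle$ is nonempty before invoking Proposition \ref{interseccion de subgrupoides}, whose first item carries exactly that proviso. If one preferred an explicit description instead, one could alternatively show that $\langle B\rangle$ coincides with the set of all defined finite products of elements of $B\cup B^{-1}$, but the intersection argument is shorter and already supplies everything required.
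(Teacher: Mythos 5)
Your proposal is correct and follows exactly the paper's own argument: intersect the (nonempty, since $\mathcal{G}$ belongs to it) family of all subgroupoids containing $B$, note the intersection is nonempty because it contains $B\neq\emptyset$, and invoke Proposition \ref{interseccion de subgrupoides}(1) before reading off minimality from the construction. Your version is if anything slightly more careful than the paper's, since you make explicit why the nonemptiness hypothesis of that proposition is satisfied.
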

\begin{proof}
Let $\mathfrak{F}=\{\mathcal{H}\subseteq \mathcal{G}\mid \mathcal{H}\textnormal{ is a subgrupoid of }\mathcal{G}\textnormal{ and }B\subseteq \mathcal{H}\}$. Then $\mathfrak{F}\neq \emptyset$ since $\mathcal{G}\in \mathfrak{F}$ and $\bigcap \mathfrak{F}\neq \emptyset$ because $B\subseteq \bigcap \mathfrak{F}$. Thus $\bigcap \mathfrak{F}$ is a subgroupoid of $\mathcal{G}$ by virtue of Proposition \ref{interseccion de subgrupoides}. Moreover, it is clear that $\bigcap \mathfrak{F}$ is the smallest subgroupoid of $\mathcal{G}$ such that $B\subseteq \bigcap \mathfrak{F}$. The other part is clear.
\end{proof}

If $\mathcal{G}$ is a groupoid and $\emptyset \neq B\subseteq \mathcal{G}$, then the subgroupoid given in the previous proposition will be called \textbf{subgroupoid generated by $B$} and it will be denoted by $\langle B\rangle$. It can be proved that the set $\langle B\rangle$ is given by $\langle B\rangle=\left\{x_1^{\alpha_1}x_2^{\alpha_2}...x_n^{\alpha_n}\mid \exists x_1^{\alpha_1}x_2^{\alpha_2}...x_n^{\alpha_n}, x_i\in B, \alpha_i\in \{1,-1\}\, \forall i, n\in \mathbb{N}  \right\}$. Also, note that $\langle B\rangle _w=\langle B\rangle \cup \mathcal{G}_0$ is a wide subgroupoid and it will be called \textbf{wide subgroupoid generated by $B$}.

\vspace{.3cm}

If $\mathcal{G}$ is a groupoid and $\mathcal{H}$, $\mathcal{K}$ are wide subgroupoids of $\mathcal{G}$, we define the set
$$\mathcal{H}\mathcal{K}:=\{hk\mid d(h)=r(k), h\in \mathcal{H}, \, k\in\mathcal{K}\}.$$ Note that $\mathcal{H}\mathcal{K}\neq \emptyset$ since for $g\in \mathcal{G}$, $d(g)\in \mathcal{H}$ and $d(g)\in \mathcal{K}$ and thus $d(g)=d(g)d(g)\in \mathcal{H}\mathcal{K}$. Hence $\mathcal{G}_0\subseteq \mathcal{H}\mathcal{K}$.

\begin{prop}\label{producto HK=KH}
Let $\mathcal{G}$ be a groupoid and $\mathcal{H}$, $\mathcal{K}$ wide subgroupoids of $\mathcal{G}$. Then $\mathcal{H}\mathcal{K}$ is a wide subgroupoid of $\mathcal{G}$ if and only if $\mathcal{H}\mathcal{K}=\mathcal{K}\mathcal{H}$.
\end{prop}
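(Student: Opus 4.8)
The plan is to transplant the classical group-theoretic equivalence ``$HK$ is a subgroup iff $HK=KH$'' to the groupoid setting, keeping careful track of the domain and range constraints that govern when products exist in $\mathcal{G}$. Both implications rest on the inversion formula $(hk)^{-1}=k^{-1}h^{-1}$ together with the equivalence $\exists hk \iff \exists h^{-1}g^{-1}$ supplied by Proposition \ref{t1}(4), and on the fact that $\mathcal{H}$ and $\mathcal{K}$ are themselves closed under inverses and under existing products.

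For the forward implication, assume $\mathcal{H}\mathcal{K}$ is a wide subgroupoid; I would establish the two inclusions $\mathcal{H}\mathcal{K}\subseteq\mathcal{K}\mathcal{H}$ and $\mathcal{K}\mathcal{H}\subseteq\mathcal{H}\mathcal{K}$ by a symmetric inversion argument. For the first, given $x=hk\in\mathcal{H}\mathcal{K}$, closure of the subgroupoid $\mathcal{H}\mathcal{K}$ under inverses gives $x^{-1}\in\mathcal{H}\mathcal{K}$, say $x^{-1}=h'k'$; then $x=(x^{-1})^{-1}=k'^{-1}h'^{-1}$ with $k'^{-1}\in\mathcal{K}$ and $h'^{-1}\in\mathcal{H}$, so $x\in\mathcal{K}\mathcal{H}$. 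For the second, given $y=kh\in\mathcal{K}\mathcal{H}$ one computes $y^{-1}=h^{-1}k^{-1}\in\mathcal{H}\mathcal{K}$ directly, and closure under inverses returns $y\in\mathcal{H}\mathcal{K}$. This direction is essentially bookkeeping with Proposition \ref{t1}(4) and should present no real difficulty.

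The substantive direction is the converse. Assume $\mathcal{H}\mathcal{K}=\mathcal{K}\mathcal{H}$. Wideness is immediate from the remark preceding the proposition that $\mathcal{G}_0\subseteq\mathcal{H}\mathcal{K}$, and closure under inverses follows as above, since $x=hk$ gives $x^{-1}=k^{-1}h^{-1}\in\mathcal{K}\mathcal{H}=\mathcal{H}\mathcal{K}$. The crux is closure under products. Taking $x=h_1k_1$ and $y=h_2k_2$ in $\mathcal{H}\mathcal{K}$ with $\exists xy$, I would first observe that $d(x)=d(k_1)$ and $r(y)=r(h_2)$ by Proposition \ref{t1}(3), so that $d(k_1)=r(h_2)$ and hence $\exists k_1h_2$. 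The product $k_1h_2$ lies in $\mathcal{K}\mathcal{H}=\mathcal{H}\mathcal{K}$, so it rewrites as $h_3k_3$ with $h_3\in\mathcal{H}$ and $k_3\in\mathcal{K}$. Using the identity $(gh)(kl)=g[(hk)l]$ from Proposition \ref{t1}(2) to regroup, $xy=(h_1k_1)(h_2k_2)=h_1[(k_1h_2)k_2]=h_1[(h_3k_3)k_2]$, and the same identity read backwards should recover $xy=(h_1h_3)(k_3k_2)$.

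The main obstacle, and the place demanding the most care, is verifying that every product invoked in this chain actually exists and that $h_1h_3\in\mathcal{H}$ and $k_3k_2\in\mathcal{K}$. Concretely, I would extract from $k_1h_2=h_3k_3$ the identities $r(h_3)=r(k_1)=d(h_1)$ and $d(k_3)=d(h_2)=r(k_2)$ (the last using $\exists h_2k_2$), which guarantee $\exists h_1h_3$ and $\exists k_3k_2$; closure of the subgroupoids $\mathcal{H}$ and $\mathcal{K}$ then yields $h_1h_3\in\mathcal{H}$ and $k_3k_2\in\mathcal{K}$, and a final application of Proposition \ref{t1}(2) confirms that $(h_1h_3)(k_3k_2)$ exists and equals $xy$, so $xy\in\mathcal{H}\mathcal{K}$. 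Once this domain bookkeeping is in place the proof closes.
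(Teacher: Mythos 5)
Your proof is correct and follows essentially the same route as the paper's: the forward direction by inverting elements and using $(hk)^{-1}=k^{-1}h^{-1}$, and the converse by rewriting $k_1h_2\in\mathcal{K}\mathcal{H}=\mathcal{H}\mathcal{K}$ as $h_3k_3$ and regrouping to get $(h_1h_3)(k_3k_2)$. Your explicit verification of the existence conditions ($r(h_3)=d(h_1)$ and $d(k_3)=r(k_2)$) simply spells out bookkeeping that the paper's computation $h(ks)t=h(s'k')t=(hs')(k't)$ leaves implicit.
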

\begin{proof}
  Let $x\in \mathcal{H}\mathcal{K}$. By assumption, $\mathcal{H}\mathcal{K}\leq \mathcal{G}$
  so $x^{-1}\in \mathcal{H}\mathcal{K}$. Then $x^{-1}=hk$ with $h\in \mathcal{H}$, $k\in \mathcal{K}$,
  $d(h)=r(k)$ and hence $x=(x^{-1})^{-1}=(hk)^{-1}=k^{-1}h^{-1}\in \mathcal{K}\mathcal{H}$. On the other
  hand, we know that $G_0\subseteq \mathcal{K}\mathcal{H}$. If $y\in \mathcal{K}\mathcal{H}$ then $y=kh$ with
  $k\in \mathcal{K}$, $h\in \mathcal{H}$, $d(k)=r(h)$. Then $k^{-1}\in \mathcal{K}$ and $h^{-1}\in \mathcal{H}$
  and by assumption $\exists h^{-1}k^{-1}$ and $h^{-1}k^{-1}\in \mathcal{H}\mathcal{K}$. Then $y=kh=(h^{-1}k^{-1})^{-1}\in \mathcal{H}\mathcal{K}$.

  Conversely, suppose that $\mathcal{H}\mathcal{K}=\mathcal{K}\mathcal{H}$. First note that  $\mathcal{H}\mathcal{K}\neq \emptyset$, if $x,y\in \mathcal{H}\mathcal{K}$ then $x=hk$, $h\in \mathcal{H}$, $k\in \mathcal{K}$, $d(h)=r(k)$ and $y=st$, $s\in \mathcal{H}$, $t\in \mathcal{K}$, $d(s)=r(t)$. If $\exists xy$ then $xy=(hk)(st)=h(ks)t=h(s'k')t=(hs')(k't)\in \mathcal{H}\mathcal{K}$. Finally, if $x\in \mathcal{H}\mathcal{K}$
  then $x=hk$, $h\in \mathcal{H}$, $k\in \mathcal{K}$, $d(h)=r(k)$. Thus $x^{-1}=(hk)^{-1}=k^{-1}h^{-1}\in \mathcal{K}\mathcal{H}\subseteq \mathcal{H}\mathcal{K}$. That is, $\mathcal{H}\mathcal{K}\leq \mathcal{G}$ and since $G_0\subseteq \mathcal{H}\mathcal{K}$ we conclude that $\mathcal{H}\mathcal{K}$ is a wide subgroupoid of $\mathcal{G}$.
  \end{proof}

\section{Normal subgroupoids}
In this section we present the definition of normal subgroupoid and present several properties of these classes of subgroupoids. We also introduce the normalizer of a subgroupoid $\mathcal{H}$ and prove some of its algebraic properties. In particular, we prove that the normalizer is the greatest wide subgroupoid of $\mathcal{G}$ in which $\mathcal{H}$ is normal.

\begin{defn}
Let $\mathcal{G}$ be a groupoid. The subgroupoid $\mathcal{H}$ of $\mathcal{G}$
is said to be normal, denoted by $\mathcal{H}\lhd \mathcal{G}$, if $\mathcal{H}$ is wide and $g^{-1}\mathcal{H}g\subseteq \mathcal{H}$ for all $g\in \mathcal{G}$.
\end{defn}

Note that $\mathcal{G}$ is a normal  subgroupoid of $\mathcal{G}$. Now, $\mathcal{G}_0$ is a wide subgroupoid of $\mathcal{G}$ and if $g\in \mathcal{G}$ then $g^{-1}\mathcal{G}_0g=\{d(g)\}\subseteq \mathcal{G}_0$. That is, $\mathcal{G}_0$ is also a normal subgroupoid of $\mathcal{G}$. Moreover, $Iso(\mathcal{G})$ is a normal subgroupoid of $\mathcal{G}$ and if $\mathcal{G}$ is an abelian groupoid and $\mathcal{H}$ is a wide subgroupoid of $\mathcal{G}$ then  $Iso(\mathcal{H})$ is normal in $Iso(\mathcal{G})$.

\vspace{.3cm}

Normality also can be characterized as follows \cite{Br}: the subgroupoid $\mathcal{H}$ is said to be normal if $\mathcal{H}_0=\mathcal{G}_0$ and $g^{-1}\mathcal{H}_{r(g)}g=\mathcal{H}_{d(g)}$ for all $g\in \mathcal{G}$. The equivalence between the two definitions of normality can be consulted in \cite[Lemma 3.1]{PT}.





\vspace{.3cm}
The following proposition extends some well-known results from normal subgroups to normal subgroupoids.

\begin{prop}\label{SIT}
Let $\mathcal{G}$ be a grupoid. Then:
\begin{enumerate}
  \item If $\left\{\mathcal{H}_i\right\}_{i\in I}$ is a family of normal subgroupoids of $\mathcal{G}$, then $\bigcap _{i\in I}H_i$ is a normal subgroupoid of $\mathcal{G}$.
  \item If $\emptyset \neq B\subseteq \mathcal{G}$, then there exist the smallest normal subgroupoid of $\mathcal{G}$ which contains $B$.
  \item If $\mathcal{H}$ is a subgroupoid of $\mathcal{G}$ and $\mathcal{K}$ is a normal subgroupoid of $\mathcal{G}$ such that $d(k)=r(k)$ for all $k\in \mathcal{K}$, then $\mathcal{H}\mathcal{K}$ is a subgroupoid of $\mathcal{G}$.
  \item If $\mathcal{H}$ and $\mathcal{K}$ are normal subgroupoids of $\mathcal{G}$ with $d(k)=r(k)$ for all $k\in \mathcal{K}$, then $\mathcal{H}\mathcal{K}$ is a normal subgroupoid of $\mathcal{G}$.
  \item If $\mathcal{H}$ is a wide subgroupoid of $\mathcal{G}$ and $\mathcal{K}$ is a normal subgroupoid of $\mathcal{G}$, then $\mathcal{H}\cap \mathcal{K}$ is a normal subgroupoid of $\mathcal{H}$.
  \item If $\mathcal{H}$ and $\mathcal{K}$ are normal subgroupoids of $\mathcal{G}$ such that $\mathcal{H}\cap \mathcal{K}=\mathcal{G}_0$, then $hk=kh$ for all $h\in \mathcal{H}$ and $k\in \mathcal{K}$ such that $r(h)=d(h)=r(k)=d(k)$.
\end{enumerate}
\end{prop}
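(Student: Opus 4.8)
The plan is to treat the six items in the order given, leaning on Propositions \ref{interseccion de subgrupoides} and \ref{SG} for the lattice-theoretic parts and on the rearrangement identities of Proposition \ref{t1} for the product computations. For item (1), I would first invoke Proposition \ref{interseccion de subgrupoides}(2) to see that $\bigcap_{i}\mathcal{H}_i$ is a wide subgroupoid, and then check the conjugation condition pointwise: if $x=g^{-1}hg$ with $h\in\bigcap_i\mathcal{H}_i$ and $r(h)=d(h)=r(g)$, then $g^{-1}hg\in\mathcal{H}_i$ for every $i$ by normality of each $\mathcal{H}_i$, so $x\in\bigcap_i\mathcal{H}_i$. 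Item (2) is then immediate by mimicking the proof of Proposition \ref{SG}: the family $\mathfrak{F}$ of all normal subgroupoids containing $B$ is nonempty (it contains $\mathcal{G}$), and $\bigcap\mathfrak{F}$ is normal by item (1) and is plainly the smallest such subgroupoid.

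The technical heart is items (3) and (4), and I expect the domain/range bookkeeping in the partial product to be the main obstacle. Throughout I would exploit the standing hypothesis $d(k)=r(k)$ for all $k\in\mathcal{K}$ (i.e.\ $\mathcal{K}\subseteq\mathrm{Iso}(\mathcal{G})$), which is exactly what permits identities to be absorbed at the right places. For closure under inverses in (3), given $x=hk$ with $d(h)=r(k)$ I would rewrite $x^{-1}=k^{-1}h^{-1}=h^{-1}\,(hk^{-1}h^{-1})$, noting $h^{-1}\in\mathcal{H}$ and $hk^{-1}h^{-1}\in\mathcal{K}$ by normality of $\mathcal{K}$ (with $g=h^{-1}$), after checking that the inserted factor $h^{-1}h=d(h)=r(k^{-1})$ really is neutral. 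For closure under products, given $x=h_1k_1$ and $y=h_2k_2$ with $\exists xy$, I would push $k_1$ past $h_2$ via $k_1h_2=h_2\,(h_2^{-1}k_1h_2)$, so that $xy=(h_1h_2)\big[(h_2^{-1}k_1h_2)k_2\big]$ with $h_1h_2\in\mathcal{H}$ and the bracket in $\mathcal{K}$; the crux is verifying that each regrouping is legal, which is where the chain $r(k_1)=d(k_1)=d(x)=r(h_2)$ is used. Since $\mathcal{H}\subseteq\mathcal{H}\mathcal{K}$ (as $h=h\,d(h)$ with $d(h)\in\mathcal{G}_0\subseteq\mathcal{K}$), the set $\mathcal{H}\mathcal{K}$ is nonempty, so (3) yields a subgroupoid; when $\mathcal{H}$ is also wide one additionally gets $\mathcal{G}_0\subseteq\mathcal{H}\mathcal{K}$, hence a wide one. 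For (4) I would then establish the conjugation identity $g^{-1}(hk)g=(g^{-1}hg)(g^{-1}kg)$ by inserting $gg^{-1}=r(g)$ between $h$ and $k$ (legal because $d(h)=r(k)=r(g)$ once the conjugate of $hk$ is defined), and conclude using normality of $\mathcal{H}$ and of $\mathcal{K}$ separately together with (3).

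For item (5), $\mathcal{H}\cap\mathcal{K}$ is a wide subgroupoid of $\mathcal{H}$ by Proposition \ref{interseccion de subgrupoides}, and normality in $\mathcal{H}$ follows because for $h\in\mathcal{H}$ and $x\in\mathcal{H}\cap\mathcal{K}$ with $r(x)=d(x)=r(h)$ one has $h^{-1}xh\in\mathcal{H}$ (as $\mathcal{H}$ is a subgroupoid) and $h^{-1}xh\in\mathcal{K}$ (by normality of $\mathcal{K}$ in $\mathcal{G}$), hence $h^{-1}xh\in\mathcal{H}\cap\mathcal{K}$. Finally, item (6) is the groupoid analogue of the classical commutator argument: with $e:=r(h)=d(h)=r(k)=d(k)$, all of $h,k,h^{-1},k^{-1}$ lie in the isotropy group $\mathcal{G}_e$, so the commutator $c=h^{-1}k^{-1}hk$ is a well-defined element of $\mathcal{G}_e$. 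Grouping it as $(h^{-1}k^{-1}h)k$ shows $c\in\mathcal{K}$ (normality of $\mathcal{K}$), while grouping it as $h^{-1}(k^{-1}hk)$ shows $c\in\mathcal{H}$ (normality of $\mathcal{H}$); thus $c\in\mathcal{H}\cap\mathcal{K}=\mathcal{G}_0$. Since $c\in\mathcal{G}_e$ and the only identity of the group $\mathcal{G}_e$ is $e$, I conclude $c=e$, and in $\mathcal{G}_e$ the relation $h^{-1}k^{-1}hk=e$ rearranges to $hk=kh$.
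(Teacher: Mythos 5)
Your proposal is correct and follows essentially the same route as the paper's own proof: intersection plus pointwise conjugation for (1)--(2), the insertions $x^{-1}=h^{-1}(hk^{-1}h^{-1})$ and $xy=(h_1h_2)\bigl[(h_2^{-1}k_1h_2)k_2\bigr]$ for (3), the identity $g^{-1}(hk)g=(g^{-1}hg)(g^{-1}kg)$ for (4), the two-sided membership argument for (5), and the commutator trick with $\mathcal{H}\cap\mathcal{K}=\mathcal{G}_0$ for (6). The only cosmetic difference is in (6), where you conclude $c=e$ directly from $c\in\mathcal{G}_e\cap\mathcal{G}_0$ instead of the paper's step-by-step cancellation, which is a slight streamlining of the same argument.
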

\begin{proof}
  1. By Proposition \ref{interseccion de subgrupoides} (item 2) we have that $\bigcap _{i\in I}H_i$ is a wide subgrupoid of $\mathcal{G}$. Now, let $g\in \mathcal{G}$ and $h\in \bigcap _{i\in I}H_i$ such that $r(h)=d(h)=r(g)$. Then $\exists g^{-1}hg$ and since $h\in H_i$ for each $i\in I$, we have that $g^{-1}hg\in H_i$ for each $i\in I$. That is, $g^{-1}hg\in \bigcap _{i\in I}H_i$.

     2. It is enough to take the collection of normal subgroupoids of $\mathcal{G}$ that contains $B$ and to apply previous item.

    3. First note that $\mathcal{H} \mathcal{K}\neq \emptyset$ since if $h\in \mathcal{H}$ then $h=hd(h)\in \mathcal{H}\mathcal{K}$. Let $x,y\in \mathcal{H} \mathcal{K}$ then there exist $h,s\in \mathcal{H}$ and $k,t\in \mathcal{K}$ with $d(h)=r(k)$ and $d(s)=r(t)$ such that $x=hk$ and $y=st$. If $\exists xy$ then $d(k)=r(s)$ and $xy=(hk)(st)=hkst=hr(k)kst$ and since $r(k)=d(k)=r(s)$ we have that $\exists hr(s)kst=hss^{-1}kst=(hs)(s^{-1}ks)t\in \mathcal{H}\mathcal{K}$. Finally, if $x=hk$ with $h\in \mathcal{H}$, $k\in \mathcal{K}$ and $d(h)=r(k)$ then $x^{-1}=k^{-1}h^{-1}=d(k)k^{-1}h^{-1}$ and since $d(k)=r(k)=d(h)$ we have that $\exists d(h)k^{-1}h^{-1}$ and thus $x^{-1}=d(h)k^{-1}h^{-1}=h^{-1}hk^{-1}h^{-1}=h^{-1}(hk^{-1}h^{-1})\in \mathcal{H}\mathcal{K}$ and the result follows.

    4. By previous item $\mathcal{H}\mathcal{K}$ is a subgroupoid of $\mathcal{G}$. Let $g\in \mathcal{G}$, $h\in \mathcal{H}$ and $k\in \mathcal{K}$ with $r(h)=d(k)=r(g)$ then $\exists g^{-1}hkg$ and thus $d(h)=r(k)$. Then $g^{-1}hkg=g^{-1}hr(k)kg=g^{-1}hr(g)kg=(g^{-1}hg)(g^{-1}kg)\in \mathcal{H}\mathcal{K}$. That is, $\mathcal{H}\mathcal{K}$ is a normal subgroupoid of $\mathcal{G}$.

    5. It is clear that $\mathcal{H}\cap \mathcal{K}$ is a wide subgroupoid of $\mathcal{H}$. Let $g\in \mathcal{H}$ and $h\in \mathcal{H}\cap \mathcal{K}$ with $r(h)=d(h)=r(g)$, then $\exists g^{-1}hg$ and by assumptions it follows that $g^{-1}hg\in \mathcal{H}\cap \mathcal{K}$.

    6. First note that each $d(g)$ satisfies the assumptions of proposition. If $h\in \mathcal{H}$ and $k\in \mathcal{K}$ with $r(h)=d(h)=r(k)=d(k)$, then $\exists h^{-1}k^{-1}hk$. And we obtain that $h^{-1}k^{-1}hk=h^{-1}(k^{-1}hk)\in \mathcal{H}$ and $h^{-1}k^{-1}hk=(h^{-1}k^{-1}h)k\in \mathcal{K}$. Thus, $h^{-1}k^{-1}hk\in \mathcal{G}_0$, that is, $h^{-1}k^{-1}hk=d(g)$ for some $g\in \mathcal{G}$. Then, $d(k)=d(h^{-1}k^{-1}hk)=d(g)$ and thus $r(h)=d(h)=r(k)=d(k)=d(g)$. Hence
  \begin{align*}
    h^{-1}k^{-1}hk &=d(g) \\
    h(h^{-1}k^{-1}hk) &=hd(h) \\
    r(h)(k^{-1}hk) &= h \\
    (d(k)k^{-1})hk&= h\\
    k^{-1}hk & = h  \\
    k(k^{-1}hk) &=kh \\
    r(k)(hk)& =  kh \\
    (r(h)h)k &=kh \\
    hk&=kh.
  \end{align*}
\end{proof}


It is well known in groups, that given a subgroup $H$, there exists the greatest subgroup of $G$ in which $H$ is normal. Such subgroup is known as normalizer of $H$ and it satisfies some interesting properties. In our case it is natural to ask if is possible to define the normalizer of a subgroupoid. The answer to this question is affirmative as we show below.

\vspace{.3cm}

If $\mathcal{H}$ is a wide  subgroupoid of $\mathcal{G}$, we define the set $$\mathcal{N}_{\mathcal{G}}({\mathcal{H}})=\{g\in \mathcal{G}\mid g^{-1}\mathcal{H}_{r(g)}g=\mathcal{H}_{d(g)}\}$$ which will be called the \textbf{Normalizer of $\mathcal{H}$ in $\mathcal{G}$}. It is clear that $\mathcal{N}_{\mathcal{G}}({\mathcal{H}})\neq \emptyset$ since for $r(g)\in \mathcal{G}_0$ it has $r(g)^{-1}\mathcal{H}_{r(r(g))}r(g)=r(g)\mathcal{H}_{r(g)}r(g)=\mathcal{H}_{r(g)}=\mathcal{H}_{d(r(g))}$ which implies that $r(g)\in \mathcal{N}_{\mathcal{G}}({\mathcal{H}})$ and then $\mathcal{G}_0\subseteq \mathcal{N}_{\mathcal{G}}({\mathcal{H}})$. Note that in the group case this concept coincide with the normalizer of subgroups. The next proposition extends the main properties of the normalizer in groups to the normalizer in groupoids.

\begin{prop}\label{normalizer}
  Let $\mathcal{G}$ be a groupoid and $\mathcal{H}$ a wide subgroupoid of $\mathcal{G}$. Then:
  \begin{enumerate}
    \item $\mathcal{N}_{\mathcal{G}}({\mathcal{H}})$ is a wide subgroupoid of $\mathcal{G}$ that contains $\mathcal{H}$.
    \item $\mathcal{H}$ is a normal subgroupoid of $\mathcal{N}_{\mathcal{G}}({\mathcal{H}})$.
    \item $\mathcal{N}_{\mathcal{G}}({\mathcal{H}})$ is the greatest wide subgroupoid of $\mathcal{G}$ in which $\mathcal{H}$ is normal.
    \item $\mathcal{N}_{\mathcal{G}}({\mathcal{H}})=\mathcal{G}$ if and only if $\mathcal{H}\lhd \mathcal{G}$.
  \end{enumerate}
\end{prop}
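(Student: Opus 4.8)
The plan is to verify the four items in the stated order, since each later one leans on the earlier ones, and throughout to exploit the characterization of normality recalled just before the proposition: $\mathcal{H}\lhd\mathcal{G}$ if and only if $\mathcal{H}_0=\mathcal{G}_0$ and $g^{-1}\mathcal{H}_{r(g)}g=\mathcal{H}_{d(g)}$ for all $g\in\mathcal{G}$. With this in hand, the defining condition of $\mathcal{N}_{\mathcal{G}}(\mathcal{H})$ coincides pointwise with the normality condition, which will make items 2--4 nearly formal once item 1 is secured.

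For item 1, nonemptiness and wideness are already settled by the observation $\mathcal{G}_0\subseteq\mathcal{N}_{\mathcal{G}}(\mathcal{H})$ made before the statement, so it remains to check the two subgroupoid axioms together with the containment $\mathcal{H}\subseteq\mathcal{N}_{\mathcal{G}}(\mathcal{H})$. For closure under inverses I would start from $g^{-1}\mathcal{H}_{r(g)}g=\mathcal{H}_{d(g)}$ and conjugate from the outside by $g$: using $gg^{-1}=r(g)$ and that $r(g)$ acts as a left identity on $\mathcal{H}_{r(g)}$ while $g^{-1}g=d(g)$ acts on the right, one obtains $g\mathcal{H}_{d(g)}g^{-1}=\mathcal{H}_{r(g)}$; since $r(g^{-1})=d(g)$ and $d(g^{-1})=r(g)$ by Proposition \ref{propiedades de d y r}, this is exactly the statement $g^{-1}\in\mathcal{N}_{\mathcal{G}}(\mathcal{H})$. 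For closure under products, given $g,h\in\mathcal{N}_{\mathcal{G}}(\mathcal{H})$ with $\exists gh$ (so $d(g)=r(h)$), I would invoke $r(gh)=r(g)$, $d(gh)=d(h)$ and $(gh)^{-1}=h^{-1}g^{-1}$ from Proposition \ref{t1} and then chain the two defining equalities: $h^{-1}\bigl(g^{-1}\mathcal{H}_{r(g)}g\bigr)h=h^{-1}\mathcal{H}_{d(g)}h=h^{-1}\mathcal{H}_{r(h)}h=\mathcal{H}_{d(h)}$, which says $gh\in\mathcal{N}_{\mathcal{G}}(\mathcal{H})$. The containment $\mathcal{H}\subseteq\mathcal{N}_{\mathcal{G}}(\mathcal{H})$ I would prove separately: for $h\in\mathcal{H}$ the inclusion $h^{-1}\mathcal{H}_{r(h)}h\subseteq\mathcal{H}_{d(h)}$ holds because $\mathcal{H}$ is a subgroupoid, and the reverse inclusion follows by conjugating the other way, giving equality and hence $h\in\mathcal{N}_{\mathcal{G}}(\mathcal{H})$.

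Items 2--4 are then largely formal. For item 2, $\mathcal{H}$ is wide in $\mathcal{N}_{\mathcal{G}}(\mathcal{H})$ because $\mathcal{H}_0=\mathcal{G}_0=(\mathcal{N}_{\mathcal{G}}(\mathcal{H}))_0$, and for every $g\in\mathcal{N}_{\mathcal{G}}(\mathcal{H})$ the defining equality $g^{-1}\mathcal{H}_{r(g)}g=\mathcal{H}_{d(g)}$ is precisely the normality condition, so $\mathcal{H}\lhd\mathcal{N}_{\mathcal{G}}(\mathcal{H})$. For item 3, given any wide subgroupoid $\mathcal{K}$ with $\mathcal{H}\lhd\mathcal{K}$, every $g\in\mathcal{K}$ satisfies $g^{-1}\mathcal{H}_{r(g)}g=\mathcal{H}_{d(g)}$; here I would stress that the isotropy groups $\mathcal{H}_{r(g)},\mathcal{H}_{d(g)}$ are intrinsic to $\mathcal{H}$ and independent of the ambient groupoid, so this forces $g\in\mathcal{N}_{\mathcal{G}}(\mathcal{H})$ and hence $\mathcal{K}\subseteq\mathcal{N}_{\mathcal{G}}(\mathcal{H})$, which together with item 2 gives maximality. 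Item 4 is then immediate from item 3 (or directly: $\mathcal{N}_{\mathcal{G}}(\mathcal{H})=\mathcal{G}$ forces $\mathcal{H}\lhd\mathcal{G}$ by item 2, and conversely $\mathcal{H}\lhd\mathcal{G}$ places every $g\in\mathcal{G}$ into $\mathcal{N}_{\mathcal{G}}(\mathcal{H})$).

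The one place demanding genuine care is item 1, specifically the two conjugation-cancellation identities, because in a groupoid the relevant products are only partially defined: I must track which identities $d(g),r(g)$ mediate each cancellation and confirm that the subscripts on the isotropy groups line up after each conjugation (for instance that $h^{-1}\mathcal{H}_{d(g)}h$ really equals $h^{-1}\mathcal{H}_{r(h)}h$ via $d(g)=r(h)$, and that $g(g^{-1}hg)g^{-1}=h$ is legitimately defined at every stage). Once this bookkeeping of domains and ranges is done correctly, the algebra reduces to the familiar group-theoretic computation.
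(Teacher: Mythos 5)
Your proposal is correct and follows essentially the same route as the paper's proof: the same chaining of the defining equalities $g^{-1}\mathcal{H}_{r(g)}g=\mathcal{H}_{d(g)}$ for closure under products, the same outside-conjugation trick for inverses (using $r(g^{-1})=d(g)$, $d(g^{-1})=r(g)$), the same two-sided inclusion argument for $\mathcal{H}\subseteq\mathcal{N}_{\mathcal{G}}(\mathcal{H})$, and the same use of the normality characterization $t^{-1}\mathcal{H}_{r(t)}t=\mathcal{H}_{d(t)}$ to get maximality in item 3. The only cosmetic difference is that in item 2 you invoke the equality characterization of normality directly, where the paper verifies the containment $m^{-1}hm\in\mathcal{H}$ explicitly; both are legitimate given the equivalence recalled before the proposition.
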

\begin{proof}
  1. Note that the width of $\mathcal{H}$ was proved in paragraph previous to the proposition. Now, If $h\in \mathcal{H}$ then $h^{-1}\mathcal{H}_{r(h)}h\subseteq \mathcal{H}\cap \mathcal{G}_{d(h)}=\mathcal{H}_{d(h)}$. If $m\in \mathcal{H}_{d(h)}$ then $r(m)=d(m)=d(h)$ which implies that $\exists hmh^{-1}$ and $hmh^{-1}\in \mathcal{H}_{r(h)}$. Then $\exists h^{-1}(hmh^{-1})h$ and it is clear that $m=h^{-1}(hmh^{-1})h\in h^{-1}\mathcal{H}_{r(h)}h$. Hence $\mathcal{H}_{d(h)}\subseteq h^{-1}\mathcal{H}_{r(h)}h$ and thus $h^{-1}\mathcal{H}_{r(h)}h=\mathcal{H}_{d(h)}$ that is $h\in \mathcal{N}_{\mathcal{G}}(\mathcal{H})$. Then $\mathcal{H}\subseteq \mathcal{N}_{\mathcal{G}}(\mathcal{H})$.

  Let $g,t\in \mathcal{N}_{\mathcal{G}}(\mathcal{H})$ and suppose that $\exists gt$. Then $g^{-1}\mathcal{H}_{r(g)}g=\mathcal{H}_{d(g)}$, $t^{-1}\mathcal{H}_{r(t)}t=\mathcal{H}_{d(t)}$, $d(g)=r(t)$ and thus
  \begin{align*}
     (gt)^{-1}\mathcal{H}_{r(gt)}(gt) & =t^{-1}g^{-1}\mathcal{H}_{r(g)}gt \\
     & =t^{-1}\mathcal{H}_{d(g)}t \\
     & =t^{-1}\mathcal{H}_{r(t)}t \\
     & =\mathcal{H}_{d(t)} \\
     & =\mathcal{H}_{d(gt)}.
  \end{align*}
  If $t\in \mathcal{N}_{\mathcal{G}}(\mathcal{H})$ then $t^{-1}\mathcal{H}_{r(t)}t=\mathcal{H}_{d(t)}$ and thus $t\mathcal{H}_{d(t)}t^{-1}=\mathcal{H}_{r(t)}$. Hence $(t^{-1})^{-1}\mathcal{H}_{r(t^{-1})}t^{-1}=\mathcal{H}_{d(t^{-1})}$ and then $t^{-1}\in \mathcal{N}_{\mathcal{G}}(\mathcal{H})$. Therefore $\mathcal{N}_{\mathcal{G}}(\mathcal{H})$ is a wide subgroupoid of $\mathcal{G}$.

  2. By item 1, $\mathcal{H}\subseteq \mathcal{N}_{\mathcal{G}}(\mathcal{H})$ and since $\mathcal{H}$ is a wide subgroupoid of $\mathcal{G}$ we have that $\mathcal{H}$ is a wide subgroupoid of $\mathcal{N}_{\mathcal{G}}(\mathcal{H})$. Now, consider $m\in \mathcal{N}_{\mathcal{G}}(\mathcal{H})$, $h\in \mathcal{H}$ and suppose that $\exists m^{-1}hm$. Then $r(h)=d(h)=r(m)$, that is $h\in \mathcal{H}_{r(m)}$ and thus $m^{-1}hm\in m^{-1}\mathcal{H}_{r(m)}m=\mathcal{H}_{d(m)}\subseteq \mathcal{H}$. Hence $\mathcal{H}$ is normal in $\mathcal{N}_{\mathcal{G}}(\mathcal{H})$.

  3. Suppose that $\mathcal{T}$ is a wide subgroupoid of $\mathcal{G}$ and that $\mathcal{H}$ is normal in $\mathcal{T}$. If $t\in \mathcal{T}$ then $t^{-1}\mathcal{H}_{r(t)}t=\mathcal{H}_{d(t)}$ and thus $t\in \mathcal{N}_{\mathcal{G}}(\mathcal{H})$. Hence $\mathcal{T}\subseteq \mathcal{N}_{\mathcal{G}}(\mathcal{H})$ and the result follows.

  4. It is evident.
\end{proof}

Normal subgroups are very important in group theory, because they are necessary to construct the quotient group. In the groupoid case, given a wide subgroupoid  $\mathcal{H}$  of $\mathcal{G}$, in \cite{PT} Paques and Tamusiunas define a relation on $\mathcal{G}$ as follows:  for every $g,l \in \mathcal{G}$,

$$ g\equiv_{\mathcal{H}}l \Longleftrightarrow \exists l^{-1}g\,\,\,\,\text{ and} \,\,\,\,l^{-1}g\in \mathcal{H}.$$
 Furthermore, they prove that this relation is a congruence, that is an equivalence relation which is compatible with products. The equivalence class of $\equiv_{\mathcal{H}}$ containing $g$ is the set $g\mathcal{H}=\{gh\mid h\in \mathcal{H} \, \land \, r(h)=d(g)\}$. This set is called left coset of $\mathcal{H}$ in $\mathcal{G}$ containing $g$. Moreover, they prove that if $\mathcal{H}$ is a normal subgroupoid of $\mathcal{G}$ and $\mathcal{G/H}$ is the set of all left cosets of $\mathcal{H}$ in $\mathcal{G}$, then $\mathcal{G/H}$ is a groupoid such that $\exists (g\mathcal{H})(l\mathcal{H})$, if and only if, $\exists gl$ and the partial binary operation is given by $(g\mathcal{H})(l\mathcal{H})=gl\mathcal{H}$ \cite[Lemma 3.12]{PT}. That groupoid is called the \textbf{quotient groupoid} of $\mathcal{G}$ by $\mathcal{H}$.

\vspace{.3cm}

In order to improve the understanding of this work, we finish this section by presenting the notion of groupoid (strong) homomorphism and the first isomorphism theorem. The other isomorphism theorems also remains valid in our context of groupoids. The proofs of these theorems and several examples of application can be consulted in \cite{AMP}.

\begin{defn}
Let $\mathcal{G}$ and $\mathcal{G'}$ be groupoids. A map $\phi: \mathcal{G}\to \mathcal{G'}$ is called groupoid homomorphism if for all $x,y\in \mathcal{G}$, $\exists xy$ implies that $\exists \phi(x)\phi(y)$ and in this case  $\phi(xy)=\phi(x)\phi(y)$. In addition, if $\phi$ is a groupoid homomorphism and for all $x,y\in \mathcal{G}$, $\exists \phi(x)\phi(y)$ implies that $\exists xy$ then $\phi$ is called groupoid strong homomorphism.
\end{defn}

\begin{thm}\label{teoremas de isomorfismos}
(The First Isomorphism Theorem) Let $\phi: \mathcal{G}\to \mathcal{G'}$ be a groupoid strong homomorphism. If $\phi$ is surjective then there exists an strong isomorphism $\overline{\phi}:\mathcal{G}/Ker(\phi)\to \mathcal{G'}$ such that $\phi=\overline{\phi}\circ j$, where $j$ is the canonical homomorphism of $\mathcal{G}$ onto $\mathcal{G}/Ker(\phi)$.
\end{thm}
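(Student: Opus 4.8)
The plan is to mimic the classical group-theoretic argument, being careful at each step about the partial nature of the product and invoking the strong homomorphism hypothesis at precisely the points where a product must be shown to exist. First I would record the enabling preliminary: $Ker(\phi)=\{g\in\mathcal{G}\mid \phi(g)\in\mathcal{G'}_0\}$ is a normal subgroupoid of $\mathcal{G}$. This is routine but necessary, since it is exactly what legitimizes forming the quotient groupoid $\mathcal{G}/Ker(\phi)$ through the Paques--Tamusiunas construction recalled above: $Ker(\phi)$ is wide because $\phi$ carries identities to identities (an element $x$ with $\exists xx$ and $xx=x$ satisfies $d(x)=r(x)$ by Proposition \ref{t1}(3), so it is idempotent in its isotropy group and hence lies in $\mathcal{G'}_0$), and it is closed under the conjugations $g^{-1}(\cdot)g$ by a direct computation. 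The canonical map $j\colon\mathcal{G}\to\mathcal{G}/Ker(\phi)$, $j(g)=g\,Ker(\phi)$, is then a surjective strong homomorphism.

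The heart of the proof is to define $\overline{\phi}(g\,Ker(\phi))=\phi(g)$ and to show it is a well-defined strong isomorphism. For well-definedness, suppose $g\,Ker(\phi)=l\,Ker(\phi)$, i.e. $g\equiv_{Ker(\phi)}l$, so $\exists\, l^{-1}g$ and $l^{-1}g\in Ker(\phi)$. Then $\phi(l)^{-1}\phi(g)=\phi(l^{-1}g)\in\mathcal{G'}_0$; reading off domain and range with Propositions \ref{t1}(3) and \ref{propiedades de d y r} forces this identity to equal $d(\phi(g))=d(\phi(l))$ and gives $r(\phi(l))=r(\phi(g))$, whence left-multiplying by $\phi(l)$ yields $\phi(g)=\phi(l)$. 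The homomorphism property follows immediately from the quotient operation $(g\,Ker(\phi))(l\,Ker(\phi))=gl\,Ker(\phi)$ together with $\phi(gl)=\phi(g)\phi(l)$ and the matching existence condition $\exists\,(g\,Ker(\phi))(l\,Ker(\phi))\Leftrightarrow\exists\, gl$ from \cite[Lemma 3.12]{PT}; and surjectivity of $\overline{\phi}$ is inherited directly from that of $\phi$.

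The step I expect to be the main obstacle, and the one where the hypotheses genuinely exceed the group case, is injectivity, because in a groupoid one cannot simply ``divide'': I must first produce the product $l^{-1}g$ before I can even assert coset equality. So suppose $\overline{\phi}(g\,Ker(\phi))=\overline{\phi}(l\,Ker(\phi))$, that is $\phi(g)=\phi(l)$. Then $\phi(l^{-1})\phi(g)=\phi(l)^{-1}\phi(l)=d(\phi(l))$ exists, so $\exists\,\phi(l^{-1})\phi(g)$; here the strong homomorphism hypothesis is exactly what upgrades this to $\exists\, l^{-1}g$ in $\mathcal{G}$. Once $l^{-1}g$ exists, $\phi(l^{-1}g)=d(\phi(l))\in\mathcal{G'}_0$ shows $l^{-1}g\in Ker(\phi)$, hence $g\equiv_{Ker(\phi)}l$ and the two cosets coincide.

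Finally, the strong condition for $\overline{\phi}$ uses the same hypothesis once more: if $\exists\,\overline{\phi}(g\,Ker(\phi))\,\overline{\phi}(l\,Ker(\phi))=\phi(g)\phi(l)$, then $\exists\, gl$ and hence $\exists\,(g\,Ker(\phi))(l\,Ker(\phi))$. Thus $\overline{\phi}$ is a bijective strong homomorphism, so its inverse is also strong, making $\overline{\phi}$ a strong isomorphism. Since $\overline{\phi}(j(g))=\overline{\phi}(g\,Ker(\phi))=\phi(g)$ for all $g$, we obtain $\phi=\overline{\phi}\circ j$, completing the identification $\mathcal{G}/Ker(\phi)\cong\mathcal{G'}$.
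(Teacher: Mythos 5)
The paper does not actually prove this theorem: it quotes it and explicitly defers the proof (and those of the other isomorphism theorems) to \cite{AMP}, so there is no in-paper argument to compare yours against. Judged on its own, your proof is correct and is the canonical one: kernel is a wide normal subgroupoid, $\overline{\phi}(g\,Ker(\phi))=\phi(g)$ is well defined, a homomorphism via \cite[Lemma 3.12]{PT}, surjective, and injective. You also place the strong-homomorphism hypothesis at exactly the two points where the groupoid setting genuinely differs from groups: upgrading $\exists\,\phi(l)^{-1}\phi(g)$ to $\exists\, l^{-1}g$ in the injectivity step, and transferring existence of products back from $\mathcal{G'}$ to $\mathcal{G}/Ker(\phi)$ so that $\overline{\phi}$ is strong and its inverse is again a homomorphism (for a merely bijective non-strong homomorphism this last step would fail). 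Two small points deserve a line each if this were written out in full: the fact that $\phi(g^{-1})=\phi(g)^{-1}$ (used silently when you rewrite $\phi(l^{-1})\phi(g)$ as $\phi(l)^{-1}\phi(g)$), which follows from uniqueness of inverses after applying $\phi$ to $g g^{-1}=r(g)$ and $g^{-1}g=d(g)$; and the verification that $Ker(\phi)$ is closed under products and inverses, not only under conjugation, so that it is a subgroupoid in the sense of the paper's definition before normality is even discussed. Neither is a gap in substance.
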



\section{Center and commutators}

In this section we introduce the center and the commutator subgroupoid and prove several properties of them, which extend  well-known results in groups.

\begin{defn}
  Let $\mathcal{G}$ be a groupoid. We define the center of $\mathcal{G}$ as the set $\mathcal{Z}(\mathcal{G})=\{g\in Iso(\mathcal{G})\mid gh=hg \textnormal{ for all }h\in \mathcal{G}\text{ such that }d(g)=r(h)=d(h)\}$.
\end{defn}

The center of the groupoid $\mathcal{G}$ has analogous properties to those of the groups, as we show in the following
proposition.

\begin{prop}\label{center}
  Let $\mathcal{G}$ be a groupoid and $\mathcal{Z}(\mathcal{G})$ the center of $\mathcal{G}$. Then:
  \begin{enumerate}
    \item $\mathcal{Z}(\mathcal{G})=\bigsqcup_{e\in \mathcal{G}_0}Z(\mathcal{G}_e)$.
    \item $\mathcal{Z}(\mathcal{G})=Iso(\mathcal{G})$ if and only if $\mathcal{G}$ is an abelian groupoid.
    \item $\mathcal{Z}(\mathcal{G})$ is a normal subgrupoid of $Iso(\mathcal{G})$.
    \item If $\mathcal{H}$ is a wide subgrupoid of $\mathcal{Z}(\mathcal{G})$ then it is normal in $Iso(\mathcal{G})$.
  \end{enumerate}
\end{prop}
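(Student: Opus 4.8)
The plan is to prove item (1) first, since it reduces the groupoid center to the ordinary group centers of the isotropy groups, after which items (2)--(4) follow with little extra work.

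To prove item (1), I would unfold both sides. If $g\in \mathcal{Z}(\mathcal{G})$ then by definition $g\in Iso(\mathcal{G})$, so $d(g)=r(g)=e$ for a unique $e\in \mathcal{G}_0$ and $g\in \mathcal{G}_e$. The defining condition ranges over all $h$ with $d(g)=r(h)=d(h)$; since $d(g)=e$ this is exactly the requirement $h\in \mathcal{G}_e$, so $g$ commutes with every element of $\mathcal{G}_e$, i.e.\ $g\in Z(\mathcal{G}_e)$. Reading the same equivalence backwards gives the reverse inclusion, and the union is disjoint because distinct isotropy groups are disjoint (each element has a unique source and range).

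Item (2) should then drop out by comparing $\mathcal{Z}(\mathcal{G})=\bigsqcup_{e}Z(\mathcal{G}_e)$ with the decomposition $Iso(\mathcal{G})=\bigsqcup_{e}\mathcal{G}_e$: equality holds if and only if $Z(\mathcal{G}_e)=\mathcal{G}_e$ for every $e$, which is precisely the definition of an abelian groupoid. For item (3) I would check the subgroupoid axioms through item (1): widthness because $e\in Z(\mathcal{G}_e)$ for each $e$, closure under inverses because group centers are closed under inversion, and closure under products because $\exists gg'$ forces $d(g)=r(g')$ and hence $g,g'$ into a common $\mathcal{G}_e$, where $Z(\mathcal{G}_e)$ is a subgroup. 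The decisive computation for normality is that if $z$ is central and $g\in Iso(\mathcal{G})$ has $r(g)=r(z)=d(z)$, then $g$ and $z$ lie in the same isotropy group and $g^{-1}zg=g^{-1}gz=d(g)\,z=z$, so $z$ is actually \emph{fixed} by conjugation. Item (4) then follows from this same fixed-point phenomenon: a wide subgroupoid $\mathcal{H}\subseteq \mathcal{Z}(\mathcal{G})$ is wide in $Iso(\mathcal{G})$ because $\mathcal{G}_0\subseteq \mathcal{H}$, and every $h\in \mathcal{H}$ being central gives $g^{-1}hg=h\in \mathcal{H}$ for every admissible $g$.

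I do not expect a genuine obstacle here; the only thing demanding care is the bookkeeping forced by the partial operation, namely verifying that each product and each conjugation written down is actually defined by checking the matching source/range conditions. The conceptual payoff that makes the argument routine is precisely that partiality confines all relevant products to a single isotropy group, so that centrality in $\mathcal{G}$ coincides with centrality in that group and conjugating a central element returns it unchanged.
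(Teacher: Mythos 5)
Your proof is correct and takes essentially the same route as the paper's: item (1) by unfolding the definition to reduce $\mathcal{Z}(\mathcal{G})$ to the disjoint union of the group centers $Z(\mathcal{G}_e)$, and then items (2)--(4) read off from that decomposition, with the same key computation $g^{-1}zg=g^{-1}gz=d(g)z=z$ giving normality in (3) and (4). The only difference is cosmetic: you supply an explicit argument for item (2), which the paper dismisses as evident.
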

\begin{proof}
  1. If $g\in \mathcal{Z}(\mathcal{G})$ then $g\in \mathcal{G}_e$ for some $e\in \mathcal{G}_0$ and thus $d(g)=r(g)=e$. If $h\in \mathcal{G}_e$ then $\exists gh$, $\exists hg$ and then $gh=hg$ that is $g\in Z(\mathcal{G}_e)\subseteq \bigsqcup_{e\in \mathcal{G}_0}Z(\mathcal{G}_e)$.

  On the other hand, if $g\in \bigsqcup_{e\in \mathcal{G}_0}Z(\mathcal{G}_e)$ then $g\in Z(\mathcal{G}_e)$ for some $e\in \mathcal{G}_0$. If $h\in \mathcal{G}$ is such that $d(g)=r(h)=d(h)$ then $e=d(h)=r(h)$ and thus $h\in \mathcal{G}_e$ and hence $gh=hg$. That is $g\in \mathcal{Z}(\mathcal{G})$.

  2. It is evident.

  3. First of all that $\mathcal{G}_0\subseteq \mathcal{Z}(\mathcal{G})$. Let $g,h\in \mathcal{Z}(\mathcal{G})$ and suppose that $\exists gh$. Then by item 1. $g\in Z(\mathcal{G}_e)$ and $h\in Z(\mathcal{G}_{e'})$ for some $e,e'\in \mathcal{G}_0$. Then $d(g)=r(g)=e$, $d(h)=r(h)=e'$ and since $d(g)=r(h)$ we have $e=e'$. Thus $h\in Z(\mathcal{G}_e)$ and hence $gh\in Z(\mathcal{G}_e)\subseteq \mathcal{Z}(\mathcal{G})$. If $g\in \mathcal{Z}(\mathcal{G})$ then $g\in Z(\mathcal{G}_e)$ for some $e\in \mathcal{G}_0$ and since $Z(\mathcal{G}_e)$ is a subgroup of $\mathcal{G}_e$ we have $g^{-1}\in Z(\mathcal{G}_e)\subseteq \mathcal{Z}(\mathcal{G})$. Thus $\mathcal{Z}(\mathcal{G})$ is an ample subgroupoid of $\mathcal{G}$.

  Finally, let $g\in Iso(\mathcal{G})$ and $h\in \mathcal{Z}(\mathcal{G})$ such that $r(h)=d(h)=r(g)$. Then $g\in \mathcal{G}_e$ for some $e\in \mathcal{G}_0$ and thus $r(g)=d(g)=e$ and then $r(h)=d(h)=e$. Thus $h\in Z(\mathcal{G}_e)$ and since $\exists g^{-1}hg$ we have that $g^{-1}hg=g^{-1}gh=d(g)h=eh=h\in \mathcal{Z}(\mathcal{G})$. Hence $\mathcal{Z}(\mathcal{G})$ is a normal subgroupoid of $Iso(\mathcal{G})$.

  4. Let $g\in Iso(\mathcal{G})$ and $h\in \mathcal{H}$ such that $r(h)=d(h)=r(g)$. Then $g\in \mathcal{G}_e$ for some $e\in \mathcal{G}_0$ and thus $r(g)=d(g)=e$ and then $r(h)=d(h)=e$. Thus $h\in Z(\mathcal{G}_e)$ and since $\exists g^{-1}hg$ we have that $g^{-1}hg=g^{-1}gh=d(g)h=eh=h\in \mathcal{H}$. Hence $\mathcal{H}$ is a normal subgrupoid of $Iso(\mathcal{G})$.
\end{proof}

If we wish to define the commutator subgroupoid of a groupoid $\mathcal{G}$, then we must start by defining the commutator
of two elements. Thus, if $x,y\in \mathcal{G}$ then $\exists xyx^{-1}y^{-1}$ if and only if $d(x)=r(x)=d(y)=r(y)$ if and only if $x,y\in \mathcal{G}_e$ for some $e\in \mathcal{G}_0$. That is, for $x,y\in \mathcal{G}_e$ for some $e\in \mathcal{G}_0$ we define the commutator of $x,y$ as $[x,y]=x^{-1}y^{-1}xy$.

\begin{defn}
  Let $\mathcal{G}$ be a groupoid. The commutator subgroupoid of $\mathcal{G}$ is given by the set $\mathcal{G}'=\langle [x,y]\mid x,y\in \mathcal{G}_e, e\in \mathcal{G}_0\rangle$.
\end{defn}

Note that $[x,y]^{-1}=(x^{-1}y^{-1}xy)^{-1}=[y,x]$ and $d([x,y])=r([x,y])=e$. Moreover, $xy=yx[x,y]$ and thus $xy=yx$ if and only if $[x,y]=e$. Finally, by following Proposition \ref{SG} the elements of $\mathcal{G}'$ are all the finite products of commutators in $\mathcal{G}$. That is,

$$\mathcal{G}'=\{x_1x_2\cdot\cdot\cdot x_n\mid \exists x_1x_2\cdot\cdot\cdot x_n, n\geq 1 \text{ and each }x_i\text{ is a commutator}\}.$$

More generally, if $\mathcal{H},\mathcal{K}$ are wide subgroupoids of $\mathcal{G}$ then we define $[\mathcal{H},\mathcal{K}]=\langle[x,y]\mid x\in \mathcal{H}_e,y\in \mathcal{K}_e,e\in \mathcal{G}_0\rangle$ and with this notation it is clear that $\mathcal{G}'=[\mathcal{G},\mathcal{G}]$.
The main properties of the commutator subgroupoid are given in the following proposition. Note that all of them are valid in groups.

\begin{prop}\label{commutator}
  Let $\mathcal{G}$ be a groupoid, let $x,y\in \mathcal{G}_e$, $e\in \mathcal{G}_0$, and let $\mathcal{H}$ a wide subgroupoid of $\mathcal{G}$. Then:
  \begin{enumerate}
    \item $\mathcal{G}'=\bigsqcup_{e\in \mathcal{G}_0}\mathcal{G}_e'$.
    \item $\mathcal{G}'=\mathcal{G}_0$ if and only if $\mathcal{G}$ is an abelian groupoid.
    \item If $\mathcal{H}\lhd \mathcal{G}$ then $[\mathcal{H},\mathcal{G}]\leq \mathcal{H}$.
    \item $\mathcal{G}'$ is a normal subgroupoid of $\mathcal{G}$ and $\mathcal{G}/\mathcal{G}'$ is an abelian groupoid.
    \item $\mathcal{G}/\mathcal{G}'$ is the largest abelian quotient of $\mathcal{G}$ in the sense that if $\mathcal{H}\lhd \mathcal{G}$ and $\mathcal{G}/\mathcal{H}$ is abelian, then $\mathcal{G}'\leq \mathcal{H}$. 
    \item If $\sigma :\mathcal{G}\to \mathcal{A}$ is any homomorphism of $\mathcal{G}$ into an abelian groupoid $\mathcal{A}$, then there exists a homomorphism $\theta :\mathcal{G}/\mathcal{G}'\to \mathcal{A}$ such that $\sigma=\theta \circ j$ where $j$ is the canonical homomorphism of $\mathcal{G}$ into $\mathcal{G}/\mathcal{G}'$.
  \end{enumerate}
\end{prop}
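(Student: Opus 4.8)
The plan is to establish item (1) first, since items (2) and (4) and all the abelianness computations reduce to it. The key observation is that any commutator $[x,y]$ with $x,y\in\mathcal{G}_e$ satisfies $d([x,y])=r([x,y])=e$ (by Proposition \ref{t1}(3)), so it lies in $\mathcal{G}_e$; consequently a product $x_1\cdots x_n$ of commutators is defined only when all factors share a common base identity $e$, forcing it to lie in the ordinary group-theoretic commutator subgroup $\mathcal{G}_e'$ of the isotropy group $\mathcal{G}_e$. Using the explicit description of the generated subgroupoid from Proposition \ref{SG}, I would prove both inclusions to obtain $\mathcal{G}'=\bigsqcup_{e\in\mathcal{G}_0}\mathcal{G}_e'$, the union being disjoint because every element of $\mathcal{G}_e'$ has base $e$. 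Item (2) is then immediate: by the definition of abelian groupoid, $\mathcal{G}$ is abelian iff each $\mathcal{G}_e$ is abelian iff each $\mathcal{G}_e'=\{e\}$, which by (1) is exactly $\mathcal{G}'=\mathcal{G}_0$.

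For item (3) I would take a generating commutator $[x,y]$ with $x\in\mathcal{H}_e$, $y\in\mathcal{G}_e$ and write $[x,y]=x^{-1}(y^{-1}xy)$. Normality of $\mathcal{H}$ applied with $g=y$ (note $x\in\mathcal{H}_{r(y)}$) gives $y^{-1}xy\in\mathcal{H}$, and since $x^{-1}\in\mathcal{H}$ and the product is formed inside $\mathcal{G}_e$, the commutator lies in $\mathcal{H}$; as $\mathcal{H}$ is a subgroupoid it therefore contains the whole generated subgroupoid, so $[\mathcal{H},\mathcal{G}]\leq\mathcal{H}$.

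The heart of the argument is item (4). For normality I would verify the conjugation identity $g^{-1}[x,y]g=[\,g^{-1}xg,\;g^{-1}yg\,]$ whenever $r(g)=e$, by inserting $gg^{-1}=r(g)=e$ between the factors of $x^{-1}y^{-1}xy$ and using $(g^{-1}ag)^{-1}=g^{-1}a^{-1}g$ together with Proposition \ref{t1}; since $g^{-1}xg,\,g^{-1}yg\in\mathcal{G}_{d(g)}$, the conjugate is again a commutator, and extending multiplicatively over a finite product shows $g^{-1}\mathcal{G}'g\subseteq\mathcal{G}'$. Together with $\mathcal{G}_0\subseteq\mathcal{G}'$ (each $e=[e,e]$) this gives $\mathcal{G}'\lhd\mathcal{G}$. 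For abelianness of $\mathcal{G}/\mathcal{G}'$ I would observe that two cosets lying in a common isotropy group necessarily come from $x,y\in\mathcal{G}_e$, and the equality $(x\mathcal{G}')(y\mathcal{G}')=(y\mathcal{G}')(x\mathcal{G}')$ reduces to $(yx)^{-1}(xy)=[x,y]\in\mathcal{G}'$, which holds by definition. The main obstacle throughout is the bookkeeping of the partial operation: one must repeatedly check that sources and ranges match so that every product written is actually defined, and in particular confirm that distinct identities of $\mathcal{G}$ stay in distinct cosets, so that the isotropy groups of $\mathcal{G}/\mathcal{G}'$ are precisely the $\mathcal{G}_e/\mathcal{G}_e'$.

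Items (5) and (6) are then short. For (5), abelianness of $\mathcal{G}/\mathcal{H}$ forces $xy\mathcal{H}=yx\mathcal{H}$ for $x,y\in\mathcal{G}_e$, hence $[x,y]\in\mathcal{H}$ for each generator, so $\mathcal{G}'\leq\mathcal{H}$ because $\mathcal{H}$ is a subgroupoid. For (6), since $\mathcal{A}$ is abelian and $\sigma$ is a homomorphism, $\sigma([x,y])=[\sigma(x),\sigma(y)]$ is an identity of $\mathcal{A}$ (the images $\sigma(x),\sigma(y)$ lie in the same abelian isotropy group), so $\sigma$ is trivial on every commutator and hence on all of $\mathcal{G}'$. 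Therefore $\sigma$ is constant on each coset of $\mathcal{G}'$, and I would define $\theta(g\mathcal{G}'):=\sigma(g)$, check well-definedness from $l^{-1}g\in\mathcal{G}'\Rightarrow\sigma(g)=\sigma(l)$, and verify directly that $\theta$ is a homomorphism with $\theta\circ j=\sigma$; this is the universal factorization underlying Theorem \ref{teoremas de isomorfismos}.
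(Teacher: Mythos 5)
Your proposal is correct and follows essentially the same route as the paper's proof: item (1) via the observation that composable commutators must share a common base identity, item (3) by applying normality to $y^{-1}xy$, item (4) by inserting $gg^{-1}=e$ to show conjugates of commutators are commutators and then checking coset commutators are trivial, and items (5)--(6) by the same coset computation and the kernel argument $\mathcal{G}'\subseteq Ker(\sigma)$ with $\theta(x\mathcal{G}')=\sigma(x)$. The only cosmetic difference is that you deduce item (2) from the decomposition in item (1), whereas the paper argues it directly from $[x,y]\in\mathcal{G}_0$; the substance is identical.
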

\begin{proof}
  1. If $a\in \mathcal{G}'$, then there exist commutators $x_1,x_2,\cdot\cdot\cdot, x_n$ such that $\exists x_1x_2\cdot\cdot\cdot x_n$ and $a=x_1x_2\cdot\cdot\cdot x_n$. Then $x_1,x_2,\cdot\cdot\cdot, x_n\in \mathcal{G}_e$ for some $e\in \mathcal{G}_0$ and thus $a\in \mathcal{G}_e'$. The other inclusion is evident.

  2. Let $x,y\in \mathcal{G}_e$, $e\in \mathcal{G}_0$. Then $\exists x^{-1}y^{-1}xy$ and thus $x^{-1}y^{-1}xy\in \mathcal{G}'$. Then by assumption $x^{-1}y^{-1}xy=e$ which implies that $xy=yx$, that is, $\mathcal{G}_e$ is an abelian group.

  Conversely, if $\mathcal{G}$ is an Abelian groupoid then for $x,y\in \mathcal{G}_e$ it has that $[x,y]=e$ and the result follows.

  3. Since $[\mathcal{H},\mathcal{G}]$ is a subgroupoid of $\mathcal{G}$ it is enough to show that $[\mathcal{H},\mathcal{G}]\subseteq \mathcal{H}$. Then if $[x,y]\in [\mathcal{H},\mathcal{G}]$ then $x\in \mathcal{H}_e$, $y\in \mathcal{G}_e$ for some $e\in \mathcal{G}_0$ and $[x,y]=x^{-1}y^{-1}xy$. Then by assumption $y^{-1}xy\in \mathcal{H}$ and hence $[x,y]\in \mathcal{H}$ which implies that $[\mathcal{H},\mathcal{G}]\subseteq \mathcal{H}$.

  4. By item 1, $\mathcal{G}'$ is a disjoint union of groups, that is, $\mathcal{G}'$ is a groupoid. Moreover, if $e\in \mathcal{G}_0 $ then $e=[e,e]\in \mathcal{G}'$ and thus $\mathcal{G}'$ is a wide subgroupoid of $\mathcal{G}$. Let $g\in \mathcal{G}$ and $a\in \mathcal{G}'$ such that $\exists g^{-1}ag$. Then $r(a)=d(a)=r(g)$ and $a=[x_1,y_1][x_2,y_2]\cdot \cdot \cdot [x_n,y_n]$ where $x_i,y_i\in \mathcal{G}_e$ for some $e\in \mathcal{G}_0$. Then $gg^{-1}=r(g)=r(a)=d(a)=e$ and we obtain
  \begin{align*}
    g^{-1}ag & =g^{-1}[x_1,y_1][x_2,y_2]\cdot \cdot \cdot [x_n,y_n]g \\
     & = g^{-1}[x_1,y_1]e[x_2,y_2]e\cdot \cdot \cdot e[x_n,y_n]g \\
     & = g^{-1}[x_1,y_1]gg^{-1}[x_2,y_2]g\cdot \cdot \cdot g^{-1}[x_n,y_n]g \\
     & =[g^{-1}x_1g,g^{-1}y_1g][g^{-1}x_2g,g^{-1}y_2g]\cdot \cdot \cdot [g^{-1}x_ng,g^{-1}y_ng]\in \mathcal{G}'.
  \end{align*}
  Hence, $\mathcal{G}'$ is a normal subgroupoid of $\mathcal{G}$.

  Now, let $x\mathcal{G}',y\mathcal{G}'\in (\mathcal{G}/\mathcal{G}')_{e\mathcal{G}'}$ for some $e\mathcal{G}'\in (\mathcal{G}/\mathcal{G}')_0$. Then $\exists (x\mathcal{G}')^{-1}(y\mathcal{G}')^{-1}\linebreak (x\mathcal{G}')(y\mathcal{G}')$ and $(x\mathcal{G}')^{-1}(y\mathcal{G}')^{-1}(x\mathcal{G}')(y\mathcal{G}')=(x^{-1}\mathcal{G}')(y^{-1}\mathcal{G}')
  (x\mathcal{G}')(y\mathcal{G}')=\linebreak (x^{-1}y^{-1}xy)\mathcal{G}'=e\mathcal{G}'$. Which implies that $(x\mathcal{G}')(y\mathcal{G}')=(y\mathcal{G}')(x\mathcal{G}')$ that is $(\mathcal{G}/\mathcal{G}')_{e\mathcal{G}'}$ is an abelian group.

  5. Let $[x,y]\in \mathcal{G}'$, then $x,y\in \mathcal{G}_e$ for some $e\in \mathcal{G}_0$. Now, $[x,y]\mathcal{H}=(x^{-1}y^{-1}xy)\mathcal{H}=(x^{-1}\mathcal{H})(y^{-1}\mathcal{H})(x\mathcal{H})(y\mathcal{H})=
  (x^{-1}\mathcal{H})(x\mathcal{H})(y^{-1}\mathcal{H})(y\mathcal{H})=e\mathcal{H}$ and hence $[x,y]\mathcal{H}=e\mathcal{H}$. That is, $[x,y]\in \mathcal{H}$ and then $\mathcal{G}'\subseteq \mathcal{H}$.

  6. First note that if $[x,y]\in \mathcal{G}'$ then $x,y\in \mathcal{G}_e$ for some $e\in \mathcal{G}_0$ and thus $r(x)=d(x)=r(y)=d(y)=e$ which implies that $r(\sigma(x))=d(\sigma(x))=r(\sigma(y))=d(\sigma(y))=\sigma(e)\in \mathcal{A}_{\sigma (e)}$. Then
  \begin{align*}
    \sigma ([x,y]) & =\sigma (x^{-1}y^{-1}xy) \\
     & =\sigma(x^{-1})\sigma(y^{-1})\sigma(x)\sigma(y) \\
     & =\sigma(x^{-1})\sigma(x)\sigma(y^{-1})\sigma(y)\\
     &=\sigma(e)\in \mathcal{A}_0.
  \end{align*}
  Hence $\mathcal{G}'\subseteq Ker (\sigma)$. Know, we define $\theta:\mathcal{G}/\mathcal{G}'\to \mathcal{A}$ by $\theta (x\mathcal{G}')=\sigma (x)$ for each $x\mathcal{G}'\in \mathcal{G}/\mathcal{G}'$. If $x\mathcal{G}'=y\mathcal{G}'$ then $\exists y^{-1}x$ and $y^{-1}x\in \mathcal{G}'\subseteq Ker(\sigma)$. Then $\sigma(y^{-1}x)=\sigma (y)^{-1}\sigma (x)=e'$ for some $e'\in \mathcal{A}_0$ which implies that $\sigma (x)=\sigma (y)$. Hence $\theta$ is a well defined function. Moreover, since $\sigma$ is a homomorphism then $\theta$ is also a homomorphism and it is clear that $\sigma =\theta\circ j$.
  \end{proof}

\section{Inner isomorphisms}

In this section we define the concept of inner isomorphism in groupoids, which coincide naturally with those of groups. We extend several results of these isomorphisms and in particular we prove that the set of all the inner isomorphisms of $\mathcal{G}$ is a normal subgroupoid and it is isomorphic to the quotient groupoid of $\mathcal{G}$ by its center.

If $\mathcal{G}$ is a groupoid, we define $$\mathcal{A}(\mathcal{G})=\{f:\mathcal{G}_e\to \mathcal{G}_{e'}\mid e,e'\in\mathcal{G}_0\textnormal{ and }f\textnormal{ is an isomorphism}\}.$$ Then for $f,g\in \mathcal{A}(\mathcal{G})$ we say that $\exists f\circ g$ if and only if $D(f)=R(g)$ where $D(f)$ and $R(g)$ denotes the domain of $f$ and the range of $g$ respectively and in this case $(f\circ g)(x)=f(g(x))$ for all $x\in D(g)$. With this partial operation the set $\mathcal{A}(\mathcal{G})$ is a groupoid where for $f\in \mathcal{A}(\mathcal{G})$ it has that $d(f)=id_{D(f)}$, $r(f)=id_{R(f)}$ and $f^{-1}$ is the inverse of $f$. The elements of $\mathcal{A}(\mathcal{G})$ will be called partial isomorphisms of $\mathcal{G}$. Note that if $\mathcal{G}$ is a group then the set $\mathcal{A}(\mathcal{G})$ coincide with $Aut(\mathcal{G})$.

In the group case the notion of inner automorphism is very important in several subjects. The next results show the interpretation of this concept in the groupoid case and we extend several results well-known in groups.

\begin{prop}\label{isomorfismos internos}
  Let $\mathcal{G}$ be a groupoid and $g\in \mathcal{G}$. Then the function $\mathcal{I}_g:\mathcal{G}_{d(g)}\to \mathcal{G}_{r(g)}$ defined as $\mathcal{I}_g(x)=gxg^{-1}$ for all $x\in \mathcal{G}_{d(g)}$ is a partial isomorphism of $\mathcal{G}$.
\end{prop}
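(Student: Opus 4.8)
The plan is to verify directly the three things needed for $\mathcal{I}_g$ to be a partial isomorphism in the sense of $\mathcal{A}(\mathcal{G})$: that it is a well-defined map $\mathcal{G}_{d(g)}\to\mathcal{G}_{r(g)}$, that it is a group homomorphism, and that it is bijective. First I would check well-definedness. Fix $x\in\mathcal{G}_{d(g)}$, so $d(x)=r(x)=d(g)$. Since $r(x)=d(g)$ we have $\exists gx$, and Proposition \ref{t1}(3) gives $d(gx)=d(x)=d(g)=r(g^{-1})$, where the last equality is Proposition \ref{propiedades de d y r}; hence $\exists (gx)g^{-1}$ and the expression $gxg^{-1}$ is defined. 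Applying Proposition \ref{t1}(3) once more yields $r(gxg^{-1})=r(g)$ and $d(gxg^{-1})=d(g^{-1})=r(g)$, so $gxg^{-1}\in\mathcal{G}_{r(g)}$. Thus $\mathcal{I}_g$ indeed maps $\mathcal{G}_{d(g)}$ into $\mathcal{G}_{r(g)}$.

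Next I would establish that $\mathcal{I}_g$ is a homomorphism. For $x,y\in\mathcal{G}_{d(g)}$ the product $xy$ is defined and again lies in $\mathcal{G}_{d(g)}$, while $\mathcal{I}_g(x),\mathcal{I}_g(y)\in\mathcal{G}_{r(g)}$ so their product exists. Using associativity together with $g^{-1}g=d(g)$ and the relation $x\,d(g)=x$ (valid because $d(x)=d(g)$), I would collapse $(gxg^{-1})(gyg^{-1})=gx(g^{-1}g)yg^{-1}=g(xy)g^{-1}=\mathcal{I}_g(xy)$, which is the homomorphism property.

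Finally, for bijectivity I would exhibit an explicit inverse, namely $\mathcal{I}_{g^{-1}}:\mathcal{G}_{r(g)}\to\mathcal{G}_{d(g)}$, which is well-defined by the same argument applied to $g^{-1}$ (recall $d(g^{-1})=r(g)$ and $r(g^{-1})=d(g)$). A direct computation using $g^{-1}g=d(g)$ and $gg^{-1}=r(g)$ shows $\mathcal{I}_{g^{-1}}(\mathcal{I}_g(x))=(g^{-1}g)x(g^{-1}g)=x$ for all $x\in\mathcal{G}_{d(g)}$, and symmetrically $\mathcal{I}_g(\mathcal{I}_{g^{-1}}(y))=y$ for all $y\in\mathcal{G}_{r(g)}$. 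Hence $\mathcal{I}_g$ is a bijective homomorphism of groups, i.e. an isomorphism $\mathcal{G}_{d(g)}\to\mathcal{G}_{r(g)}$, and therefore an element of $\mathcal{A}(\mathcal{G})$. The only point requiring genuine attention throughout is the bookkeeping of the source and range maps $d$ and $r$, which guarantees that each partial product written down actually exists; once Propositions \ref{t1} and \ref{propiedades de d y r} are invoked systematically there is no real obstacle.
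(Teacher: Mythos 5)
Your proof is correct, and for the first two thirds it is the same argument the paper gives: identical bookkeeping of $d$ and $r$ (via Propositions \ref{t1} and \ref{propiedades de d y r}) for well-definedness, and the identical collapse $g x\, d(g)\, y g^{-1} = (gxg^{-1})(gyg^{-1})$ for the homomorphism property. The only divergence is bijectivity. The paper proves injectivity by the cancellation law (from $gxg^{-1}=gyg^{-1}$ conclude $x=y$) and surjectivity by checking that $g^{-1}mg$ is a preimage of $m\in\mathcal{G}_{r(g)}$; you instead exhibit $\mathcal{I}_{g^{-1}}$ as an explicit two-sided inverse. These are nearly the same computation --- the paper's surjectivity check is literally your identity $\mathcal{I}_g\circ \mathcal{I}_{g^{-1}}=\mathrm{id}_{\mathcal{G}_{r(g)}}$ --- but your version buys a dividend the paper only collects later: the formula $(\mathcal{I}_g)^{-1}=\mathcal{I}_{g^{-1}}$, which is precisely what is re-derived in the proof of Proposition \ref{inner}, item 1, to show that $\mathcal{I}(\mathcal{G})$ is closed under inverses in $\mathcal{A}(\mathcal{G})$. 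The paper's route is marginally more self-contained at this point (it needs no separate well-definedness check for $\mathcal{I}_{g^{-1}}$), but as you note that check is immediate by applying the first step to $g^{-1}$, using $d(g^{-1})=r(g)$ and $r(g^{-1})=d(g)$.
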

\begin{proof}
  First note that if $x\in \mathcal{G}_{d(g)}$ then $r(x)=d(x)=d(g)$ which implies that $\exists gxg^{-1}$. Moreover $r(gxg^{-1})=r(g)$ and $d(gxg^{-1})=d(g^{-1})=r(g)$ that is $gxg^{-1}\in \mathcal{G}_{r(g)}$. Then $\mathcal{I}_g$ is a well defined function. If $x,y\in \mathcal{G}_{d(g)}$ then $xy\in \mathcal{G}_{d(g)}$ and then
  \begin{align*}
    \mathcal{I}_g(x) & = gxyg^{-1} \\
     & =gxd(g)yg^{-1} \\
     & =gxg^{-1}gyg^{-1} \\
     & =\mathcal{I}_g(x)\mathcal{I}_g(y).
  \end{align*}
  So $\mathcal{I}_g$ is a homomorphism of groups, in particular it is an strong homomorphism of groupoids.
  Now let $x,y\in \mathcal{G}_{d(g)}$ such that $\mathcal{I}_g(x)=\mathcal{I}_g(y)$ then $g^{-1}xg=g^{-1}yg$ and by the cancellation law (valid for groupoids) we obtain $x=y$ and thus $\mathcal{I}_g$ is an injective function. If $m\in \mathcal{G}_{r(g)}$ then $r(m)=d(m)=r(g)$ which implies that $\exists g^{-1}m$ and $\exists mg$ and thus $\exists g^{-1}mg$. Note that $r(g^{-1}mg)=d(g^{-1}mg)=d(g)$ and thus $g^{-1}mg\in \mathcal{G}_{d(g)}$ and then
  \begin{align*}
    \mathcal{I}_g(g^{-1}mg) & =g(g^{-1}mg)g^{-1} \\
     & =gg^{-1}mgg^{-1}\\
     & =r(g)mr(g) \\
     & =m.
  \end{align*}
  That is $\mathcal{I}_g$ is a surjective function and we conclude that $\mathcal{I}_g$ is a partial strong isomorphism of $\mathcal{G}$. Note in particular that $\mathcal{I}_g$ is an isomorphism of groups.
\end{proof}

The isomorphisms given in Proposition \ref{isomorfismos internos} will be called partial inner isomorphisms of $\mathcal{G}$ and the set of all the inner isomorphisms of $\mathcal{G}$ will be denoted by $\mathcal{I}(\mathcal{G})$. 

We say that the wide subgroupoid $\mathcal{H}$ of $\mathcal{G}$ is \textbf{invariant} by the partial inner isomorphism $\mathcal{I}_g$, $g\in \mathcal{G}$, if $\mathcal{I}_g(\mathcal{H}\cap D(\mathcal{I}_g))=\mathcal{H}\cap R(\mathcal{I}_g)$. That is, if $\mathcal{I}_g(\mathcal{H}_{r(g)})=\mathcal{I}_g(\mathcal{H}\cap \mathcal{G}_{r(g)})=\mathcal{H}\cap \mathcal{G}_{d(g)}=\mathcal{H}_{d(g)}$.


\begin{prop}\label{inner}
  Let $\mathcal{G}$ be a groupoid and $\mathcal{H}$ an wide subgroupoid of $\mathcal{G}$. Then
  \begin{enumerate}
    \item $\mathcal{I}(\mathcal{G})$ is a normal subgroupoid of $\mathcal{A}(\mathcal{G})$.
    \item $\mathcal{I}(Iso(\mathcal{G}))=\{\mathcal{I}e\mid e\in \mathcal{G}_0\}$ if and only if $\mathcal{G}$ is a abelian groupoid.
    \item The function $\Theta :\mathcal{G}\to \mathcal{I}(\mathcal{G})$ defined by $\Theta (g)=\mathcal{I}_g$ for all $g\in \mathcal{G}$ is an strong homomorphism.
    \item The groupoids $\mathcal{G}/\mathcal{Z}(\mathcal{G})$ and $\mathcal{I}(\mathcal{G})$ are isomorphic.
    \item $\mathcal{H}$ is normal if and only if it is invariant for all the partial inner isomorphisms of $\mathcal{G}$.
  \end{enumerate}
\end{prop}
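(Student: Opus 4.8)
The plan is to reduce everything to two elementary identities for the maps $\mathcal{I}_g$, dispatch items (1)--(3) and (5) by direct verification, and obtain the isomorphism in (4) from the First Isomorphism Theorem. First I would record that for $g,h\in\mathcal{G}$ with $\exists gh$ one has $D(\mathcal{I}_g)=\mathcal{G}_{d(g)}=\mathcal{G}_{r(h)}=R(\mathcal{I}_h)$, so $\exists\,\mathcal{I}_g\circ\mathcal{I}_h$, and $(\mathcal{I}_g\circ\mathcal{I}_h)(x)=g(hxh^{-1})g^{-1}=(gh)x(gh)^{-1}$ gives $\mathcal{I}_g\circ\mathcal{I}_h=\mathcal{I}_{gh}$; similarly $\mathcal{I}_{g^{-1}}=\mathcal{I}_g^{-1}$ (the function inverse of $\mathcal{I}_g$) and $\mathcal{I}_e=\mathrm{id}_{\mathcal{G}_e}$ for $e\in\mathcal{G}_0$. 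These three facts immediately yield (3): the first is the homomorphism condition for $\Theta$, while $\exists\,\mathcal{I}_g\circ\mathcal{I}_h\Rightarrow d(g)=r(h)\Rightarrow\exists gh$ is the strong condition.

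For (1), the identities of $\mathcal{A}(\mathcal{G})$ are exactly the maps $\mathrm{id}_{\mathcal{G}_e}=\mathcal{I}_e$, so $\mathcal{A}(\mathcal{G})_0\subseteq\mathcal{I}(\mathcal{G})$ and $\mathcal{I}(\mathcal{G})$ is wide, while closure under products and inverses is precisely the content of the identities above. For normality I would take $f\in\mathcal{A}(\mathcal{G})$ with $\exists\,f^{-1}\circ\mathcal{I}_g\circ f$, note that matching domains and ranges forces $d(g)=r(g)=e$ and $R(f)=\mathcal{G}_e$, and then use that $f^{-1}$ is a group isomorphism to compute $(f^{-1}\circ\mathcal{I}_g\circ f)(x)=f^{-1}(g)\,x\,f^{-1}(g)^{-1}=\mathcal{I}_{f^{-1}(g)}(x)$, so the conjugate lies in $\mathcal{I}(\mathcal{G})$. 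For (2), if $\mathcal{G}$ is abelian then each $\mathcal{G}_e$ is abelian and $\mathcal{I}_g=\mathrm{id}_{\mathcal{G}_e}=\mathcal{I}_e$ for $g\in\mathcal{G}_e$; conversely, if every $\mathcal{I}_g$ with $g\in Iso(\mathcal{G})$ is an identity then $gxg^{-1}=x$ on each $\mathcal{G}_e$, so each isotropy group is abelian.

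The substantive item is (4), which I would derive from Theorem \ref{teoremas de isomorfismos}. By (3) the map $\Theta$ is a strong homomorphism, and it is surjective since $\mathcal{I}(\mathcal{G})=\{\mathcal{I}_g\mid g\in\mathcal{G}\}$ by definition; hence it remains only to compute $Ker(\Theta)$. Now $g\in Ker(\Theta)$ means $\mathcal{I}_g$ is an identity of $\mathcal{I}(\mathcal{G})$, which first forces $D(\mathcal{I}_g)=R(\mathcal{I}_g)$, i.e.\ $d(g)=r(g)=e$ so that $g\in\mathcal{G}_e$, and then $\mathcal{I}_g=\mathrm{id}_{\mathcal{G}_e}$ says $gxg^{-1}=x$ for all $x\in\mathcal{G}_e$, i.e.\ $g\in Z(\mathcal{G}_e)$. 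Thus $Ker(\Theta)=\bigsqcup_{e\in\mathcal{G}_0}Z(\mathcal{G}_e)$, which by Proposition \ref{center}(1) is exactly $\mathcal{Z}(\mathcal{G})$, and the First Isomorphism Theorem then produces a strong isomorphism $\mathcal{G}/\mathcal{Z}(\mathcal{G})\cong\mathcal{I}(\mathcal{G})$. I expect this kernel computation to be the main (if brief) obstacle, since it is the one place where the ``diagonal'' constraint $d(g)=r(g)$ and Proposition \ref{center} must both be invoked.

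Finally, for (5) I would unwind the second characterization of normality recorded after the definition of normal subgroupoid, namely that the wide subgroupoid $\mathcal{H}$ is normal iff $g^{-1}\mathcal{H}_{r(g)}g=\mathcal{H}_{d(g)}$ for all $g\in\mathcal{G}$. Since $g^{-1}\mathcal{H}_{r(g)}g=\mathcal{I}_{g^{-1}}(\mathcal{H}_{r(g)})$ and, writing $k=g^{-1}$ so that $d(k)=r(g)$ and $r(k)=d(g)$, this reads exactly as the invariance condition $\mathcal{I}_k(\mathcal{H}_{d(k)})=\mathcal{H}_{r(k)}$, the quantified normality condition coincides with invariance of $\mathcal{H}$ under every partial inner isomorphism, the substitution $g\mapsto g^{-1}$ being a bijection of $\mathcal{G}$. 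As $\mathcal{H}$ is wide by hypothesis, this gives the desired equivalence with no further work.
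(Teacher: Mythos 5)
Your proposal is correct and follows essentially the same route as the paper's own proof: the identities $\mathcal{I}_g\circ\mathcal{I}_h=\mathcal{I}_{gh}$, $\mathcal{I}_{g^{-1}}=\mathcal{I}_g^{-1}$, $\mathcal{I}_e=\mathrm{id}_{\mathcal{G}_e}$ for items (1)--(3), the conjugation computation $\sigma^{-1}\circ\mathcal{I}_g\circ\sigma=\mathcal{I}_{\sigma^{-1}(g)}$ for normality, the kernel computation $Ker(\Theta)=\bigsqcup_{e\in\mathcal{G}_0}Z(\mathcal{G}_e)=\mathcal{Z}(\mathcal{G})$ plus the First Isomorphism Theorem for (4), and the rewriting of $g^{-1}\mathcal{H}_{r(g)}g=\mathcal{H}_{d(g)}$ as invariance for (5). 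If anything, your treatment of (5) via the substitution $k=g^{-1}$ is slightly more careful than the paper's, which writes $\mathcal{I}_g(\mathcal{H}_{r(g)})=\mathcal{H}_{d(g)}$ in a way that clashes with the convention $\mathcal{I}_g(x)=gxg^{-1}$ on $\mathcal{G}_{d(g)}$.
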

\begin{proof}
  1. If $e\in \mathcal{G}_0$ then the partial inner isomorphism $\mathcal{I}_e:\mathcal{G}_e\to \mathcal{G}_e$ is given by $\mathcal{I}_e(x)=exe^{-1}=exe=x=id_{\mathcal{G}_e}(x)$ for all $x\in \mathcal{G}_e$. That is $\mathcal{I}_e=id_{\mathcal{G}_e}$ which implies that $\mathcal{A}(\mathcal{G})_0\subseteq \mathcal{I}(\mathcal{G})$.

  Let $\mathcal{I}_g, \mathcal{I}_h\in \mathcal{I}(\mathcal{G})$ and suppose that $\exists \mathcal{I}_g\circ \mathcal{I}_h$. Then $D(\mathcal{I}_g)=R(\mathcal{I}_h)$ that is $\mathcal{G}_{d(g)}=\mathcal{G}_{r(h)}$ and thus $d(g)=r(h)$ which implies that $\exists gh$. Now, for $x\in \mathcal{G}_{d(h)}=\mathcal{G}_{d(gh)}$ we obtain $(\mathcal{I}_g\circ \mathcal{I}_h)(x)=\mathcal{I}_g(\mathcal{I}_h(x))=g(hxh^{-1})g^{-1}=(gh)x(gh)^{-1}=\mathcal{I}_{gh}(x)$. That is $\mathcal{I}_g\circ \mathcal{I}_h=\mathcal{I}_{gh}\in \mathcal{I}(\mathcal{G})$. Now, if $\mathcal{I}_g\in \mathcal{I}(\mathcal{G})$ then $\mathcal{I}_{g^{-1}}\in \mathcal{I}(\mathcal{G})$ and $D(\mathcal{I}_g)=\mathcal{G}_{d(g)}=\mathcal{G}_{r(g^{-1})}=R(\mathcal{I}_{g^{-1}})$ that is $\exists \mathcal{I}_g\circ \mathcal{I}_{g^{-1}}$ and $\mathcal{I}_g\circ \mathcal{I}_{g^{-1}}=\mathcal{I}_{r(g)}=id_{\mathcal{G}_{r(g)}}$. Also $D(\mathcal{I}_{g^{-1}})=\mathcal{G}_{d(g^{-1})}=\mathcal{G}_{r(g)}=R(\mathcal{I}_g)$ that is $\exists \mathcal{I}_{g^{-1}}\circ \mathcal{I}_g$ and $\mathcal{I}_{g^{-1}}\circ \mathcal{I}_g=\mathcal{I}_{d(g)}=id_{\mathcal{G}_{d(g)}}$ and hence $(\mathcal{I}_g)^{-1}=\mathcal{I}_{g^{-1}}\in \mathcal{I}(\mathcal{G})$. And thus $\mathcal{I}(\mathcal{G})$ is a wide subgroupoid of $\mathcal{A}(\mathcal{G})$.

  Let $\sigma\in \mathcal{A}(\mathcal{G})$, $\mathcal{I}_g\in \mathcal{I}(\mathcal{G})$ and suppose that $\exists \sigma ^{-1}\circ \mathcal{I}_g\circ \sigma$. Then $R(\mathcal{I}_g)=D(\mathcal{I}_g)=R(\sigma)$ and thus $\mathcal{G}_{r(g)}=\mathcal{G}_{d(g)}=R(\sigma)=D(\sigma ^{-1})$ which implies that $g,g^{-1}\in D(\sigma ^{-1})$. Now for $x\in D(\sigma)$ we have that
  \begin{align*}
    (\sigma ^{-1}\circ \mathcal{I}_g\circ \sigma)(x) & =\sigma ^{-1}(\mathcal{I}_g(\sigma(x))) \\
     & =\sigma ^{-1}(g\sigma (x)g^{-1}) \\
     & =\sigma ^{-1}(g)x\sigma^{-1} (g^{-1}) \\
     & =\sigma ^{-1}(g)x(\sigma^{-1} (g))^{-1} \\
     & =\mathcal{I}_{\sigma^{-1}(g)}(x).
  \end{align*}
Then $\sigma ^{-1}\circ \mathcal{I}_g\circ \sigma=\mathcal{I}_{\sigma^{-1}(g)}\in \mathcal{I}(\mathcal{G})$ and hence $\mathcal{I}(\mathcal{G})$ is a normal subgroupoid of $\mathcal{A}(\mathcal{G})$.

2. Let $g\in \mathcal{G}_e$ for some $e\in \mathcal{G}_0$. Then the partial inner isomorphism $\mathcal{I}_g\in \mathcal{I}(Iso(\mathcal{G}))$ and thus $\mathcal{I}_g=\mathcal{I}_{e'}$ for some $e'\in \mathcal{G}_0$. Then $D(\mathcal{I}_g)=D(\mathcal{I}_{e'})$ and $R(\mathcal{I}_g)=R(\mathcal{I}_{e'})$ and since $r(g)=d(g)=e$ it has that $\mathcal{G}_e=\mathcal{G}_{d(g)}=\mathcal{G}_{r(g)}=\mathcal{G}_{e'}$. Thus $e=e'$ and then $gxg^{-1}=\mathcal{I}_g(x)=\mathcal{I}_e(x)=x$ for all $x\in \mathcal{G}_e$ that is $gx=xg$ for all $x\in \mathcal{G}_e$. Hence $\mathcal{G}_e$ is an abelian group and thus $\mathcal{G}$ is an abelian groupoid.

On the other hand, let $\mathcal{I}_g\in \mathcal{I}(Iso(\mathcal{G}))$, $g\in \mathcal{G}_e$ and $e\in \mathcal{G}_0$. Then $d(g)=r(g)=e$ and then for $x\in \mathcal{G}_e$, $\mathcal{I}_g(x)=gxg^{-1}=gg^{-1}x=x=\mathcal{I}_e(x)$. That is $\mathcal{I}_g=\mathcal{I}_e$ and the result follows.

3. If $g,h\in \mathcal{G}$ and $\exists gh$ then $d(g)=r(h)$. Then $D(\Theta (g))=D(\mathcal{I}_g)=\mathcal{G}_{d(g)}=\mathcal{G}_{r(h)}=R(\mathcal{I}_h)=R(\Theta (h))$ and thus $\exists \Theta (g)\circ \Theta (h)$. Now, if $x\in D(\mathcal{I}_{gh})=\mathcal{G}_{d(gh)}=\mathcal{G}_{d(h)}$ then
\begin{align*}
  \Theta (gh)(x) & =\mathcal{I}_{gh}(x) =(gh)x(gh)^{-1}=(gh)x(h^{-1}g^{-1}) \\
   & =g(hxh^{-1})g^{-1}=\mathcal{I}_g(\mathcal{I}_h(x))=(\mathcal{I}_g\circ \mathcal{I}_h)(x) \\
   &=(\Theta (g)\circ \Theta (h))(x).
\end{align*}
That is $\Theta (gh)=\Theta (g)\circ \Theta (h)$ and then $\Theta$ is a homomorphism of groupoids.

Finally let $g,h\in \mathcal{G}$ and suppose that $\exists \Theta (g)\circ \Theta (h)$. Then $D(\Theta (g))=R(\Theta (h))$, that is $\mathcal{G}_{d(g)}=\mathcal{G}_{r(h)}$ which implies that $d(g)=r(h)$ and thus $\exists gh$. Hence $\Theta$ is an strong homomorphism.

4. First note that the strong homomorphism $\Theta$ of item 3 is surjective.

Now, we prove that $Ker(\Theta)=\mathcal{Z}(\mathcal{G})$. If $g\in Ker(\Theta)$ then $\Theta (g)=\mathcal{I}_g=\mathcal{I}_e$ for some $e\in \mathcal{G}_0$. Thus $\mathcal{G}_{d(g)}=D(\mathcal{I}_g)=D(\mathcal{I}_e)=\mathcal{G}_e$ and $\mathcal{G}_{r(g)}=R(\mathcal{I}_g)=R(\mathcal{I}_e)=\mathcal{G}_e$ which implies that $\mathcal{G}_{d(g)}=\mathcal{G}_{r(g)}=\mathcal{G}_e$ and thus $d(g)=r(g)=e$. Then $g\in \mathcal{G}_e$ and $gxg^{-1}=\mathcal{I}_g(x)=\mathcal{I}_e(x)=x$ for all $x\in \mathcal{G}_e$, that is, $gx=xg$ for all $x\in \mathcal{G}_e$. Hence $g\in Z(\mathcal{G}_e)\subseteq \mathcal{Z}(\mathcal{G})$.

Finally, if $g\in \mathcal{Z}(\mathcal{G})$ then $g\in Z(\mathcal{G}_e)$ for some $e\in \mathcal{G}_0$. If $x\in \mathcal{G}_e$ then $\mathcal{I}_g(x)=gxg^{-1}=xgg^{-1}=x=\mathcal{I}_e(x)$. That is $\Theta (g)=\mathcal{I}_g=\mathcal{I}_e\in \mathcal{I}(\mathcal{G})_0$.

Then by using the first isomomorphism theorem (Theorem \ref{teoremas de isomorfismos}) we obtain that $\mathcal{G}/\mathcal{Z}(\mathcal{G})$ is isomorphic to $\mathcal{I}(\mathcal{G})$.

5. If $g\in \mathcal{G}$ then since $\mathcal{H}$ is normal we have $g^{-1}\mathcal{H}_{r(g)}g=\mathcal{H}_{d(g)}$. That is $\mathcal{I}_g(\mathcal{H}_{r(g)})=\mathcal{H}_{d(g)}$ and the result follows. The converse is analogous.
\end{proof}


\begin{thebibliography}{54}

\bibitem{AMP} J. \'{A}vila, V. Mar\'{i}n and H. Pinedo, Groupoids: Substructures and homomorphisms, https://arxiv.org/abs/1905.09389. (2019).



\bibitem{BP} D. Bagio and A. Paques, Partial Groupoid Actions: Globalization, Morita Theory, and Galois Theory, \emph{Comm. Algebra} \textbf{40} (2012) 3658-3678.


\bibitem{B} H. Brandt, $\ddot{U}$ber eine Verallgemeinerung des Gruppenbegriffes, \emph{Math. Ann.}, \textbf{96} (1926) 360-366.

\bibitem{Br} R. Brown, From groups to groupoids: a brief survey, \emph{Bull. Lond. Math. Soc.}, \textbf{19}, (1987) 113134.

\bibitem{DF} D. S. Dummit and R. M. Foote (2004), Abstract Algebra, \emph{Wiley and Sons Inc.}, New Delhi.

\bibitem{E} C. Ehresmann, Oeuvres compl\'{e}tes. Parties I.1, I.2, \emph{Topologie alg\'{e}Brique et g\'{e}om\'{e}trie diff\'{e}rentielle}, 1950.

\bibitem{L} M. V. Lawson (1998), Inverse Semigroups: The Theory of Partial Symmetries, \emph{World Scientific}, Singapore.





\bibitem{I} Gh. Ivan, Strong Morphisms of Groupoids, {\it Balkan Journal of Geometry and Its Applications}, Vol. 4, No. 1, (1999) 91-102.

\bibitem{I2} Gh. Ivan, Algebraic constructions of Brandt Groupoids, Proceeding of the Algebra Symposium, Babes-Bolyai University Cluj, 69-90, 2002.


\bibitem{MA} H. Myrnouri and  M. Amini,  Decomposable abelian groupoids, {\it Int. Math. Forum.}, \textbf{5} (2010), 2371-2380.



\bibitem{PT} A. Paques, and  T. Tamusiunas, The Galois correspondence theorem  for  groupoid actions, {\it J. Algebra}, \textbf{509}, 105--123 (2018).


\end{thebibliography}
\end{document}